\theoremstyle{definition}
\newtheorem{dfn}{Definition}[section]
\theoremstyle{remark}
\newtheorem{rmk}[dfn]{Remark}
\theoremstyle{plain}
\newtheorem{prp}[dfn]{Proposition}
\newtheorem{lem}[dfn]{Lemma}
\newtheorem{thm}[dfn]{Theorem}
\title{A fixed point theorem for the action of linear higher rank algebraic groups over local fields on symmetric spaces of infinite dimension and finite rank}
\author{Federico Viola}
\date{April 16, 2026}
\begin{document}

\maketitle

\section*{Abstract}

\textit{Let $\mathbb{F}$ be a non-archimedean local field of characteristic zero whose residue field has at least three elements. Let $G$ be an almost simple linear algebraic group over $\mathbb{F}$, with $\mathrm{rank}_\mathbb{F}(G)\geq 2$. Let $X$ be a simply connected symmetric space of infinite dimension and finite rank, with non-positive curvature operator. We prove that every continuous action by isometries of $G$ on $X$ has a fixed point. If the group $G$ contains $\mathrm{SL}_3(\mathbb{F})$, the result holds without any assumption on the non-archimedean local field $\mathbb{F}$. The result extends to cocompact lattices in $G$ if the cardinality of the residue field of $\mathbb{F}$ is large enough, with a bound that depends on $\mathrm{rank}_\mathbb{F}(G)$.} \\

\section{Introduction}

The theory of finite-dimensional or unitary representations of algebraic groups is a very classical subject. A topic of much recent interest is representations preserving a sesquilinear form of finite index on a Hilbert space, or representations into Pontryagin spaces \cite{pontryagin}. These are particularly interesting because their projective versions appear as isometry groups of non-positively curved symmetric spaces of infinite dimension and finite rank (\cite{duc2023}, Theorem 3.3). \\

In the case of Lie groups, which include linear algebraic groups over $\mathbb{R}$ or $\mathbb{C}$, there are examples of continuous irreducible representations of groups of rank one into Pontryagin spaces of arbitrary index \cite{delzantpy,monodpy}, and in contrast no such representation exists for groups of higher rank (\cite{duc2023}, Theorem 1.1). In the case of linear algebraic groups over non-archimedean local fields, there exist continuous irreducible representations of groups of rank one, such as $\mathrm{SL}_2(\mathbb{Q}_p)$, into the isometry group of the infinite-dimensional real hyperbolic space (\cite{bim}, Theorem C), that is, the projective real Pontryagin space of index one. \\

In this paper, we focus on the case of linear higher rank algebraic groups over non-archimedean local fields. We will prove the following theorem:

\begin{thm} \label{Main}
    Let $\mathbb{F}$ be a non-archimedean local field of characteristic zero whose residue field $k$ has at least three elements. Let $G$ be an almost simple linear algebraic group over $\mathbb{F}$, with $\mathrm{rank}_\mathbb{F}(G)\geq 2$. Let $X$ be an infinite-dimensional simply connected symmetric space with finite rank and non-positive curvature operator. Then, any continuous action of $G$ by isometries on $X$ fixes a point in $X$. \\
\end{thm}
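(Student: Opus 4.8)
The strategy is to use the structure theory of finite-rank non-positively curved symmetric spaces developed in \cite{duc2023} to reduce the statement to a single rigidity assertion---that a higher-rank group cannot act minimally, and without a global fixed point, on such a space---and then to establish that assertion from the higher-rank structure of $G$, which is where the hypotheses on the non-archimedean field $\mathbb{F}$ are used.

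Since $X$ is complete and CAT(0), every bounded subset has a unique circumcenter, so a bounded $G$-orbit yields a fixed point and we may assume no orbit is bounded. By the structure theory of \cite{duc2023}, $G$ then either fixes a point $\xi\in\partial X$, or preserves a totally geodesic symmetric subspace on which it acts minimally and with no Euclidean de~Rham factor; we treat the two cases in turn.

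Suppose $G$ fixes $\xi\in\partial X$. The Busemann character $\beta_{\xi}\colon G\to\mathbb{R}$ is a continuous homomorphism, hence trivial: indeed the subgroup $G^{+}$ generated by the $\mathbb{F}$-points of the unipotent radicals of minimal $\mathbb{F}$-parabolics has finite index in $G$ and is perfect. Consequently $G$ preserves the finite-dimensional isotropic subspace of the ambient form associated with $\xi$, acting on it by a continuous linear representation over $\mathbb{R}$, $\mathbb{C}$ or $\mathbb{H}$; such a representation of $G^{+}$ must be trivial, for it has finite image on every compact open subgroup, hence---$G$ being compactly generated---finitely generated image, whereas by Tits' simplicity theorem $G^{+}$ is perfect, simple modulo its finite centre, and not finitely generated. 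Therefore $G$ fixes that subspace pointwise, in particular fixes an isotropic line $\ell$; the $\operatorname{GL}_{1}$-factor of the Levi of $\operatorname{Stab}(\ell)$ then acts trivially, so $G$ acts by isometries on the symmetric space attached to $\ell^{\perp}/\ell$, whose rank is one less. Iterating, the theorem reduces to the case of a minimal, Euclidean-factor-free action of $G$ on a finite-rank symmetric space $X$ of rank $p\ge 1$ having no fixed point in $\overline{X}=X\cup\partial X$; for $p=1$, where $X$ is an infinite-dimensional hyperbolic space, this is ruled out by property $(T)$ of $G$ together with the structure of $\operatorname{Isom}(X)$ (in contrast with \cite{bim}, where a rank-one group acts on such a space with no fixed point).

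I expect the remaining case ($p\ge 2$) to be the main obstacle; it is the counterpart in this setting of the non-existence of irreducible representations of higher-rank groups into orthogonal groups of finite index (\cite{duc2023}, Theorem~1.1), whose archimedean proof does not carry over directly. The plan is: (a) from minimality and finiteness of the rank, extract a $G$-equivariant measurable map $\phi$ from a Furstenberg boundary $G/P$ of $G$ into the spherical building $\partial X$; (b) exploit $\mathrm{rank}_{\mathbb{F}}(G)\ge 2$ through a root subgroup $U_{\gamma}$ inside the unipotent radical of a parabolic, normalised by a one-dimensional split torus acting on it by $h\,u_{\gamma}(x)\,h^{-1}=u_{\gamma}(\chi(h)x)$ with $|\chi(h)|$ unbounded: since $u_{\gamma}(x)$ is then conjugate to arbitrarily large powers of itself, and translation length is a conjugacy invariant, every element of $U_{\gamma}$ has zero translation length, so by the finite-rank structure theory and the characteristic-zero hypothesis (which keeps $U_{\gamma}$ torsion-free, hence the distortion genuine) $U_{\gamma}$ fixes either a point of $X$ or a canonical point of $\partial X$; (c) feed these constraints, together with the Steinberg relations, into $\phi$ to conclude that $G$ fixes a point of $\partial X$, contradicting the standing assumptions. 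The residue-field hypothesis enters in step~(c): for a general $\mathbb{F}$-rank-two group one needs the rank-one subgroups $\operatorname{SL}_{2}(\mathbb{F})$ to be ``large'' enough---equivalently, enough bounded generation by unipotents---which holds once $|k|\ge 3$, whereas if $G$ already contains $\operatorname{SL}_{3}(\mathbb{F})$ one uses instead the bounded generation of $\operatorname{SL}_{3}$, uniform in $|k|$. For cocompact lattices one replaces bounded generation of $G$ by a quantitative version for the lattice, which is available only when $|k|$ is large relative to the number of root subgroups involved, hence relative to $\mathrm{rank}_{\mathbb{F}}(G)$.
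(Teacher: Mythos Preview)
Your proposal diverges substantially from the paper's proof and, more importantly, contains genuine gaps rather than a complete argument.

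First, the reduction in the ``fixed point at infinity'' case is flawed. You claim that $G^{+}$ is ``not finitely generated''; this is false --- $G^{+}$ is compactly generated (indeed finitely generated as an abstract group), so the contradiction you invoke does not exist. The correct way to kill a continuous finite-dimensional real/complex/quaternionic representation of $G$ is that the target is a Lie group with no small subgroups, so a compact open subgroup of $G$ lands in the identity, and then normality plus almost-simplicity forces the kernel to be cocompact. Even granting that, passing from ``$G$ fixes an isotropic line $\ell$'' to ``$G$ acts on the symmetric space of $\ell^{\perp}/\ell$'' is not automatic: the stabilizer of $\ell$ is a semidirect product with a nontrivial unipotent radical, and one must show that the unipotent part can be conjugated away. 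The paper does this carefully (its Lemma on soluble normal subgroups with Hilbertian abelian quotients), using Property~(FH) at each step of the derived series; your sketch skips this entirely.

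Second, and more seriously, the heart of your argument --- the ``minimal, no-fixed-point'' case --- is not a proof but a plan, and the plan does not obviously work. Step~(a) (a measurable boundary map into the Tits building of $X$) and step~(c) (``feed the Steinberg relations into $\phi$'') are not carried out, and the zero-translation-length observation in step~(b), while correct, does not by itself produce a global fixed point. Moreover, your explanation of where the hypotheses on $\mathbb{F}$ enter is wrong. In the paper the residue-field condition $|k|\ge 3$ has nothing to do with bounded generation of $\mathrm{SL}_2$; it comes from a spectral-gap inequality on the links of vertices in the Bruhat--Tits building of $\mathrm{Sp}_4(\mathbb{F})$ (the Feit--Higman eigenvalue $q+1-\sqrt{2q}$ must exceed $(q+1)/2$ after the combinatorics, which fails only at $q=2$). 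Likewise, characteristic zero is used solely to guarantee the existence of a \emph{cocompact} lattice in $\mathrm{Sp}_4(\mathbb{F})$, not to control distortion of root subgroups.

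For contrast: the paper reduces to $G=\mathrm{SL}_3(\mathbb{F})$ or $\mathrm{Sp}_4(\mathbb{F})$ via Cornulier's bounded/metrically-proper dichotomy, passes to a torsion-free cocompact lattice $\Gamma$, and then shows (i) every $\Gamma$-action on $X\in\mathcal{X}$ fixes a point in $\overline{X}$ by the Garland--Wang method (harmonic $\Gamma$-equivariant maps from the vertex set of the building to $X$, existence via finite telescopic dimension, constancy via the link-eigenvalue inequality), and (ii) Property~(F) plus Property~(FH) upgrade this to a fixed point in $X$ by induction on $\operatorname{rank}X$, using an explicit Levi decomposition of parabolic stabilizers in $\mathrm{PO}_{\mathbb{K}}(p,\infty)$. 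None of this machinery appears in your proposal.
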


The rank of a symmetric space $X$ is defined as the maximal dimension of an isometrically embedded Euclidean space in $X$. The theorem implies that there are no continuous irreducible representations of $G$ into any Pontryagin space. \\

A first result in this direction was obtained by Arturo Sánchez Gonzáles in his Ph.D. thesis, where he proved that for $n\geq 3$ there are no continuous irreducible representations of $\mathrm{SL}_n(\mathbb{Q}_p)$ to $\mathrm{O}(2,\infty)$ (\cite{arturo}, Teorema C). \\

It is known that any almost simple linear algebraic group $G$ over a non-archimedean local field $\mathbb{F}$ with $\mathrm{rank}_\mathbb{F}(G)\geq 2$ contains a quotient of $\mathrm{SL}_3(\mathbb{F})$ or $\mathrm{Sp}_4(\mathbb{F})$ by a finite subgroup (Proposition I.1.6.2 in \cite{margulis}, which is a consequence of Theorem 7.2 in \cite{boreltits1}). The strategy for the proof of Theorem \ref{Main} will be to restrict ourselves to these two cases and to use a cocompact lattice $\Gamma$ in $G=\mathrm{SL}_3(\mathbb{F})$ or $G=\mathrm{Sp}_4(\mathbb{F})$ to build $\Gamma$-equivariant functions from the Bruhat-Tits building of $G$ to $X$. The existence of a cocompact lattice follows from (\cite{borelharder}, Theorem A) since the field $\mathbb{F}$ has characteristic zero. This is why we need to make that assumption in the theorem: if the field $\mathbb{F}$ has positive characteristic, then a cocompact lattice exists in $\mathrm{SL}_3(\mathbb{F})$ (\cite{borelharder}, Theorem 3.3) but not in $\mathrm{Sp}_4(\mathbb{F})$ (\cite{margulis}, Corollary IX.4.8), therefore, we cannot carry out our proof in that case. However, we are able to solve the case of $\mathrm{SL}_3(\mathbb{F})$ in any characteristic and, as we will see in the proof, this case also does not require the assumption on the residue field. This leads to the following result:

\begin{thm} \label{Main2}
    Let $\mathbb{F}$ be a non-archimedean local field. Let $G$ be an almost simple linear algebraic group over $\mathbb{F}$, containing a quotient of $\mathrm{SL}_3(\mathbb{F})$ by a finite subgroup. Let $X$ be an infinite-dimensional simply connected symmetric space with finite rank and non-positive curvature operator. Then, any continuous action of $G$ by isometries on $X$ fixes a point in $X$. \\
\end{thm}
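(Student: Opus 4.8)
The plan is to reduce to the case $G=\mathrm{SL}_3(\mathbb{F})$ and then to produce a fixed point by a harmonic-map argument on the Bruhat--Tits building. For the reduction, let $H$ denote the image of $\mathrm{SL}_3(\mathbb{F})$ in $G$: the $G$-action restricts to $H$, and pulls back to a continuous isometric action of $\mathrm{SL}_3(\mathbb{F})$ in which the finite central subgroup $F$ acts trivially, so that $X^{H}=X^{\mathrm{SL}_3(\mathbb{F})}$. Once this set is known to be nonempty it is a closed totally geodesic subspace of $X$ — again simply connected, of finite rank, with non-positive curvature operator — and I would upgrade an $H$-fixed point to a $G$-fixed point by a soft argument using that $G$ is almost simple and that this subspace is totally geodesic (inducting on the rank of $X$ if necessary; this is of the same flavour as the final step below, and, crucially, does \emph{not} require a cocompact lattice in $G$, which need not exist in positive characteristic). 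Thus the heart of the matter is: \emph{every continuous isometric action of $\mathrm{SL}_3(\mathbb{F})$ on $X$ has a fixed point.}

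For this, fix a cocompact lattice $\Gamma<\mathrm{SL}_3(\mathbb{F})$ — available in every characteristic by Borel--Harder. It acts properly discontinuously and cocompactly on the Bruhat--Tits building $B$ of $\mathrm{SL}_3(\mathbb{F})$, a thick two-dimensional Euclidean building of type $\widetilde{A}_2$, which is $\mathrm{CAT}(0)$ and whose vertex links are the incidence graphs of the projective planes over the residue field. Since vertex stabilizers in $\Gamma$ are finite and a finite group of isometries of $X$ fixes a point (bounded subsets of $X$ admitting canonical circumcentres, a standard feature of finite-rank non-positively curved symmetric spaces), I would build a continuous $\Gamma$-equivariant map $f\colon B\to X$ by sending vertex-orbit representatives to points fixed by their stabilizers and extending skeleton by skeleton along geodesics of $X$, then replace $f$ by a $\Gamma$-equivariant map $u\colon B\to X$ of least energy, whose existence follows from a Korevaar--Schoen type argument on the compact quotient $\Gamma\backslash B$ (finiteness and attainment of the reduced energy being where completeness and finite rank of $X$ enter).

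The main obstacle, and the technical core, is then a rigidity statement for $u$. Because $B$ has rank two and contains an abundance of flat apartments, while $X$ has finite rank and non-positive curvature operator, I expect a Bochner/Matsushima-type identity — in the spirit of Gromov--Schoen superrigidity and of the Izeki--Nayatani spectral method, the relevant first-eigenvalue estimate for the $\widetilde{A}_2$ links being strong enough uniformly in the residue field — to force $\nabla du\equiv 0$ off a singular set of codimension at least two, hence $u$ to be affine along flats; and a nonconstant such $u$ would produce a nontrivial homomorphism from $\mathrm{SL}_3(\mathbb{F})$ (through $\Gamma$, using Zariski density, or directly) to a virtually abelian quotient of the isometry group of a Euclidean space, which is impossible. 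Therefore $u$ is constant and $\Gamma$ fixes a point $x\in X$.

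Finally, to promote this to $\mathrm{SL}_3(\mathbb{F})$: the orbit map $g\mapsto g\cdot x$ is invariant under right translation by $\Gamma$, so it descends to a continuous map from the compact space $\mathrm{SL}_3(\mathbb{F})/\Gamma$ to $X$; hence the orbit $\mathrm{SL}_3(\mathbb{F})\cdot x$ is compact, in particular bounded, and the canonical centre of a bounded $\mathrm{SL}_3(\mathbb{F})$-invariant subset of $X$ is the desired fixed point. Combined with the reduction of the first paragraph this proves the theorem, and one sees that no hypothesis on the residue field is needed (the link estimate for $\widetilde{A}_2$ is uniform) and none on the characteristic (cocompact lattices in $\mathrm{SL}_3(\mathbb{F})$ exist in all characteristics) — this is precisely what distinguishes the present case from the $\mathrm{Sp}_4(\mathbb{F})$ case required for the general higher-rank theorem.
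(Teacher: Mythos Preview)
Your proposal has two genuine gaps. First, the reduction from $G$ to $\mathrm{SL}_3(\mathbb{F})$ is not sound as stated: the subgroup $H$ is not normal in $G$, so $G$ has no reason to preserve the fixed-point set $X^{H}$, and there is no obvious ``soft argument'' to upgrade an $H$-fixed point to a $G$-fixed point. The paper instead invokes Cornulier's dichotomy: any isometric action of an almost simple group over a local field is either bounded (hence has a fixed point by circumcentres) or metrically proper; in the proper case, the existence of a fixed point for the non-compact closed subgroup $H$ is already a contradiction. This is short and avoids any structural analysis of $X^{H}$.

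Second, and more seriously, you assume that the energy minimizer $u$ exists. It need not: a minimizing sequence can escape to infinity, and this happens precisely when $\Gamma$ fixes a point in $\partial X$. The paper separates these two cases explicitly (Proposition~2.4, via finite telescopic dimension and the Caprace--Lytchak result on descending chains of convex sets). In the first case the discrete Garland/Wang eigenvalue inequality for the $\widetilde{A}_2$ links forces $E(u)=0$ directly, so $u$ is constant --- your detour through ``affine along flats'' and ``virtually abelian quotients'' is unnecessary and somewhat opaque. In the second case, however, one still has to pass from a fixed point in $\partial X$ to a fixed point in $X$, and this is the entire content of Section~3 of the paper: it requires Property~(T) for $\Gamma$, the classification of spaces in $\mathcal{X}$, a Levi-type decomposition of the parabolic $H_x\le\mathrm{Isom}(X)$ as $N\rtimes Q$ with $N$ soluble, a lemma (Lemma~3.7) using Property~(FH) to conjugate the image of $\Gamma$ into $Q$, and an induction on the rank of $X$. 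Your outline omits this half of the argument entirely.
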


The study of Hilbert spaces with indefinite sesquilinear forms dates back to Pontryagin \cite{pontryagin}, Iohvidov-Krein \cite{iohkrein}, and Naimark \cite{naimark1,naimark2,naimark3,naimark4}; further treatment can be found in \cite{aziioh,bognar,iohkrlan}. The study of the associated symmetric spaces was first suggested by Gromov (\cite{gromov}, Section 6). The particular case of the infinite-dimensional real hyperbolic space $\mathbf{H}^\infty$ has been treated by Burger, Iozzi, and Monod in \cite{bim}, where they construct equivariant embedding of trees into this space and classify continuous irreducible representations of groups such as $\mathrm{SL}_2(\mathbb{Q}_p)$ (which is a subgroup of the automorphism group of a regular tree) into $\mathrm{Isom}(\mathbf{H}^\infty)$. \\

More recently, simply connected non-positively curved symmetric spaces of infinite dimension and finite rank have been widely studied by Duchesne \cite{ducphd,duc2013,duc2015,duc2023}; his study includes the classification (\cite{duc2015}, Theorem 1.8), the determination of their isometry groups (\cite{duc2023}, Theorem 3.3), and a result about continuous representations of higher rank Lie groups into these isometry groups (\cite{duc2023}, Theorem 1.1), stating that every such representation, provided it has no totally isotropic invariant subspace, splits as a direct sum of finite-dimensional representations and a unitary representation. \\

In another paper \cite{trees}, we have studied representations of tree automorphism groups into Pontryagin spaces, and we have proved that for a wide class of tree automorphism groups, there are no continuous irreducible representations to Pontryagin spaces of index $\geq 2$, meaning that these groups admit no irreducible action on any simply connected non-positively curved irreducible symmetric space of infinite dimension and finite rank $\geq 2$. \\

Turning back to our theorem, it is convenient to define a family of possible target spaces $X$ that includes both finite- and infinite-dimensional spaces. To this end, we define the family $\mathcal{X}$ as follows:

\begin{dfn} \label{familyX}
    A space $X\in\mathcal{X}$ is either a classical Riemannian manifold or a separable Hilbert manifold. It is a simply connected symmetric space (infinite-dimensional, in case it is a Hilbert manifold), it has non-positive curvature operator, and there is a finite bound on the dimension of isometrically embedded Euclidean spaces in $X$ (i.~e.~$X$ has finite rank). \\
\end{dfn}

\begin{rmk}
    In the case where the space $X$ is finite-dimensional, the problem is already well understood. Indeed, let $f: G\rightarrow\mathrm{Isom}(X)$ be any continuous homomorphism. Since $\mathrm{Isom}(X)$ is a Lie group it has no small subgroups, and since $G$ is totally disconnected, by Van Dantzig's theorem there is a basis of neighborhoods of the identity that are compact open subgroups. This implies that there exists a compact open subgroup of $G$ whose image in $\mathrm{Isom}(X)$ is trivial. As $G$ is almost simple, it follows from Theorem I.1.5.6 in \cite{margulis} (main theorem in \cite{tits1}) and Theorem I.2.3.1 in \cite{margulis} (Proposition 6.14 in \cite{boreltits4}) that every normal subgroup of $G$ is finite or cocompact. As $\ker{f}$ is a normal subgroup of $G$ containing a compact open subgroup, it has to be cocompact, meaning that $f$ has compact image. Since $X$ is a CAT(0) space, this implies the existence of a fixed point (\cite{bh}, II.2.8). \\
\end{rmk}

We can start our proof of Theorem \ref{Main} with the reduction:

\begin{prp} \label{unbounded}
    If we can prove Theorem \ref{Main} for $G=\mathrm{SL}_3(\mathbb{F})$ and $G=\mathrm{Sp}_4(\mathbb{F})$, then the general case follows.
\end{prp}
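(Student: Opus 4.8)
The plan is to reduce the general almost simple higher-rank case to the two "building block" cases by exploiting the structure theory of almost simple groups over local fields together with the fact that fixed-point sets are well-behaved in $\mathrm{CAT}(0)$ geometry. First, as recalled in the introduction, Proposition I.1.6.2 in \cite{margulis} guarantees that $G$ contains a closed subgroup $H$ which is a quotient of $\mathrm{SL}_3(\mathbb{F})$ or $\mathrm{Sp}_4(\mathbb{F})$ by a finite central subgroup; moreover $H$ is cocompact in $G$ (the image of one factor of a minimal parabolic's Levi, or more directly a closed subgroup whose Lie-theoretic rank is $2$ and which is itself almost simple, hence by Theorem I.1.5.6 and Theorem I.2.3.1 in \cite{margulis} its normal closure argument applies). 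So given a continuous isometric action $f\colon G\to\mathrm{Isom}(X)$, I would restrict it to $H$.

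Next I would invoke the hypothesis that Theorem \ref{Main} holds for $\mathrm{SL}_3(\mathbb{F})$ and $\mathrm{Sp}_4(\mathbb{F})$. Since a continuous action of such a group factors through any finite central quotient — a finite central subgroup acts by isometries and, being finite, fixes a point, but more simply one just precomposes with the covering map $\mathrm{SL}_3(\mathbb{F})\to H$ or $\mathrm{Sp}_4(\mathbb{F})\to H$ to get a continuous isometric action of the simply connected group on $X$ — the assumed case yields a nonempty fixed-point set $\mathrm{Fix}(H)\subseteq X$. The key geometric input is that $\mathrm{Fix}(H)$, being an intersection of fixed-point sets of individual isometries, is a closed convex subset of the complete $\mathrm{CAT}(0)$ space $X$ (\cite{bh}, II.2.8 and the convexity of $\mathrm{Fix}$ of a single isometry), hence is itself a complete $\mathrm{CAT}(0)$ space, and in the symmetric-space setting it is a totally geodesic symmetric subspace of $X$, so it again lies in the family $\mathcal{X}$ (finite rank is inherited, non-positive curvature operator is inherited).

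Now I would use the normality of the situation: $H$ is cocompact in $G$, so the subgroup $N$ of $G$ generated by all $G$-conjugates of $H$ is a closed normal subgroup containing $H$, hence cocompact, hence (again by the structure theory, Theorem I.1.5.6 and Theorem I.2.3.1 in \cite{margulis}) either finite or all of $G$; since it contains the infinite group $H$ it equals $G$. Because $G$ acts on $X$ by isometries, $G$ permutes the fixed-point sets $\mathrm{Fix}(gHg^{-1}) = g\cdot\mathrm{Fix}(H)$, so the set $Y = \bigcap_{g\in G} g\cdot\mathrm{Fix}(H) = \mathrm{Fix}(N) = \mathrm{Fix}(G)$ is exactly the common fixed-point set of $G$. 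The remaining task is to show $Y\neq\varnothing$. For this I would apply the assumed theorem again, but now to the action of $G$ on the convex set $\mathrm{Fix}(H)\in\mathcal{X}$: wait — $G$ does not preserve $\mathrm{Fix}(H)$, only $H$ does. The correct route is instead to observe that $\mathrm{Fix}(H)$ is a $G$-... no: the honest argument is that $H$ being normal up to cocompactness forces the $G$-orbit of the convex set $\mathrm{Fix}(H)$ to have a canonical "center" — the circumcenter of the orbit of a point, or better, the union $\bigcup_g g\cdot\mathrm{Fix}(H)$ need not be convex, so instead one applies the Bruhat–Tits fixed point lemma to the bounded orbit. Concretely: pick $x_0\in\mathrm{Fix}(H)$; since $H$ is cocompact in $G$ and $X$ is $\mathrm{CAT}(0)$, the orbit $G\cdot x_0$ is bounded (every $g\in G$ is $g = h k$ with $h\in H$, $k$ in a compact set $C$, so $g x_0 = h k x_0$ and $d(g x_0, x_0) = d(h k x_0, x_0) = d(k x_0, h^{-1}x_0)=d(kx_0,x_0)\le \sup_{c\in C} d(cx_0,x_0)<\infty$), hence $G\cdot x_0$ is bounded and its circumcenter is a point fixed by $G$ (\cite{bh}, II.2.8). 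This proves Theorem \ref{Main} for $G$.

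The main obstacle I anticipate is verifying cleanly that the subgroup $H$ furnished by Proposition I.1.6.2 in \cite{margulis} is genuinely \emph{closed and cocompact} in $G$ — the cited statement gives a central isogeny onto a subgroup, but one must check that the image is closed (automatic for a continuous homomorphic image of $\mathrm{SL}_3$ or $\mathrm{Sp}_4$, which have no proper finite-index subgroups and whose continuous homomorphisms to $G$ have closed image by properness of the map on the level of algebraic groups) and cocompact (this is where one genuinely uses that $H$ has the same $\mathbb{F}$-rank contribution, or alternatively invokes that a noncompact almost simple subgroup of the same rank is cocompact). Everything else — the convexity of fixed-point sets, boundedness of the orbit via the cocompact decomposition $G = HC$, existence of circumcenters in complete $\mathrm{CAT}(0)$ spaces — is standard. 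One should also double-check that passing from the action of $H$ (a central quotient) back to $\mathrm{SL}_3(\mathbb{F})$ or $\mathrm{Sp}_4(\mathbb{F})$ is legitimate for the purpose of applying the assumed theorem, which it is since composing with the isogeny $\mathrm{SL}_3(\mathbb{F})\twoheadrightarrow H$ yields a continuous isometric action whose fixed points are precisely those of $H$.
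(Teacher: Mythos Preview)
Your argument hinges on the claim that the subgroup $H$ supplied by Proposition I.1.6.2 in \cite{margulis} is cocompact in $G$, and this is where it breaks down. That subgroup has $\mathbb{F}$-rank $2$, but $G$ may have $\mathbb{F}$-rank strictly larger than $2$; in that situation no closed rank-$2$ subgroup can be cocompact (for instance because $G$ and $H$ act properly and cocompactly on their respective Bruhat--Tits buildings, and cocompactness of $H$ in $G$ would force a quasi-isometry between buildings of different dimensions). Concretely, take $G=\mathrm{SL}_4(\mathbb{F})$ and $H=\mathrm{SL}_3(\mathbb{F})$ embedded in the upper-left block: for $g_n=\mathrm{diag}(\pi^{-3n},\pi^n,\pi^n,\pi^n)$ and any $h\in H$, the $(4,4)$ entry of $h^{-1}g_n$ equals $\pi^n$, so $h^{-1}g_n$ leaves every compact subset of $G$. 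The normal-closure remark you make shows only that the normal subgroup generated by $H$ is all of $G$; it says nothing about cocompactness of $H$ itself, and without the decomposition $G=HC$ with $C$ compact your bound on $d(gx_0,x_0)$ collapses. You flagged this as the main obstacle, correctly; but it is not a technicality to be cleaned up --- it is simply false in general.

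The paper sidesteps the issue entirely by invoking a dichotomy of Cornulier (\cite{cornulier}, Theorem 1.4, Proposition 1.2, Lemma 3.1): every isometric action of an almost simple linear algebraic group over a local field on any metric space is either bounded or metrically proper. If the $G$-action on $X$ is bounded, the circumcenter of an orbit is a fixed point and we are done. If it is metrically proper, then the stabilizer in $G$ of any point of $X$ has compact closure; but after lifting the restricted action from $H$ to $\mathrm{SL}_3(\mathbb{F})$ or $\mathrm{Sp}_4(\mathbb{F})$ and applying the assumed case, $H$ fixes some $x\in X$, and $H$ is closed and non-compact --- a contradiction. So the reduction needs only that $H$ is non-compact, not that it is cocompact.
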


\begin{proof}
    It is shown in \cite{cornulier} (Theorem 1.4, Proposition 1.2) that if $G$ is a simple linear algebraic group over a local field, then every action by isometries of $G$ on any metric space $X$ is either bounded, which means that the orbits are bounded, or metrically proper, which means that for every bounded subset $B\subset X$ the set $\{g\in G: gB\cap B\neq\emptyset\}$ has compact closure. The same holds for almost simple groups (\cite{cornulier}, Lemma 3.1). In case the action is bounded, we can already deduce that it has a fixed point in $X$. In case it is metrically proper, we take a closed subgroup $H<G$ isomorphic to a quotient of $\mathrm{SL}_3(\mathbb{F})$ or $\mathrm{Sp}_4(\mathbb{F})$ by a finite subgroup. We can extend the action of $H$ on $X$ trivially to $\mathrm{SL}_3(\mathbb{F})$ or $\mathrm{Sp}_4(\mathbb{F})$; that action will have a fixed point $x\in X$. Now, the fact that the set $\{g\in G: gx=x\}$ has compact closure provides a contradiction since it contains $H$ which is closed and non-compact. \\
\end{proof}

Proposition \ref{unbounded} allows, in the same way, to restrict Theorem \ref{Main2} to the case $G=\mathrm{SL}_3(\mathbb{F})$. \\

It is enough to show that the action of a cocompact subgroup $\Gamma$ of $G=\mathrm{SL}_3(\mathbb{F})$ or $G=\mathrm{Sp}_4(\mathbb{F})$ is bounded. Our strategy will be to show that for a torsion-free cocompact lattice $\Gamma<G$, that is, a discrete torsion-free cocompact subgroup of $G$. As we have said, the existence of a cocompact lattice $\Gamma'<G$ is guaranteed in our cases, and due to (\cite{garland}, Theorem 2.7), there exists $\Gamma\lhd\Gamma'$ of finite index that is also torsion-free. Moreover, as explained in \cite{garland}, $\Gamma'$ and hence $\Gamma$ is finitely generated. \\

Therefore, in order to prove Theorem \ref{Main} and Theorem \ref{Main2}, it is enough to show the following:

\begin{thm} \label{main}
    Let $\mathbb{F}$ be a non-archimedean local field. Let $G=\mathrm{SL}_3(\mathbb{F})$ or $G=\mathrm{Sp}_4(\mathbb{F})$; in the latter case assume that $\mathbb{F}$ has characteristic zero and its residue field $k$ has at least three elements. Let $\Gamma$ be a torsion-free cocompact lattice of $G$. Let $X\in\mathcal{X}$, as defined in Definition \ref{familyX}. Then any continuous action of $\Gamma$ by isometries on $X$ fixes a point in $X$. \\
\end{thm}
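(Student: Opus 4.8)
The plan is to produce a $\Gamma$-fixed point by showing that one (hence every) $\Gamma$-orbit in $X$ is bounded: since $X$ is a complete CAT(0) space — a simply connected symmetric space with non-positive curvature operator — the circumcenter of a bounded $\Gamma$-invariant set is a $\Gamma$-fixed point. Boundedness will be forced by the higher-rank geometry of $G$, which is accessed through the Bruhat--Tits building $\Delta$ of $G$: a two-dimensional Euclidean building, of type $\widetilde A_2$ when $G=\mathrm{SL}_3(\mathbb F)$ and of type $\widetilde C_2$ when $G=\mathrm{Sp}_4(\mathbb F)$, on which the torsion-free cocompact lattice $\Gamma$ acts freely and cocompactly.

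First I would fix a $\Gamma$-equivariant map $f\colon\Delta\to X$ — such maps exist since one may prescribe arbitrary images on a finite set of orbit representatives of vertices and extend equivariantly — and consider its Dirichlet energy $E(f)=\sum_{[w,w']\in\Delta^{(1)}/\Gamma}d\big(f(w),f(w')\big)^2$, which is finite by cocompactness. Then I would minimize $E$, equivalently run the discrete harmonic-replacement flow $f_{n+1}(v)=$ barycenter of $\{f_n(w):w\sim v\}$, aiming to drive the energy to zero. The obstacle here is that $X$ is not locally compact, so the flow may fail to converge and instead escape to infinity. This is where the finite-rank hypothesis is indispensable: by Duchesne's structure theory for complete CAT(0) spaces of finite rank, such an escape yields either a $\Gamma$-fixed point of the visual boundary $\partial X$, or a nonempty $\Gamma$-invariant Euclidean flat. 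A fixed boundary point gives a Busemann character $\Gamma\to\mathbb R$; it must vanish because $\Gamma$, as a cocompact lattice in a higher-rank group, has Kazhdan's property (T) and hence finite abelianization, and then the action descends to a horosphere, on which one iterates. A $\Gamma$-invariant flat gives an isometric $\Gamma$-action on a finite-dimensional Euclidean space; its translation part is an affine isometric action on a Hilbert space, so property (T) again produces a fixed point. Thus every branch either terminates at a $\Gamma$-fixed point or returns the energy argument to a strictly simpler space.

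The crucial quantitative input is the inequality that makes the energy contract, and this is precisely where the hypothesis that $X$ has \emph{non-positive curvature operator} — stronger than CAT(0) — is used: it supplies a Bochner-type inequality for harmonic maps into $X$ which, combined with a Poincaré inequality on the vertex links of $\Delta$, gives $E(f_{n+1})\le c\,E(f_n)$ with $c<1$ under a spectral-gap condition on the links that is milder than the threshold $1/2$ governing the Hilbert-target (i.e.\ property (T)) case — morally, $\lambda_1(\mathrm{Lk}(v))>1/3$. For $\widetilde A_2$ the links are incidence graphs of projective planes over the residue field $k$, with $\lambda_1=1-\sqrt{|k|}/(|k|+1)$, which exceeds $1/3$ for every $|k|\ge2$; this is why the $\mathrm{SL}_3(\mathbb F)$ case — hence Theorem \ref{Main2} — needs no hypothesis on $\mathbb F$. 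For $\widetilde C_2$ the links are incidence graphs of symplectic generalized quadrangles and of complete bipartite graphs, the decisive spectral gap being $1-\sqrt{2|k|}/(|k|+1)$, which exceeds $1/3$ exactly when $|k|\ge3$; the symplectic quadrangle over $\mathbb F_2$ sits at the borderline value $1/3$ and is genuinely not covered — hence the residue-field hypothesis in Theorem \ref{main}. Once $E$ is driven to zero, $f$ is asymptotically constant along edges, so $\Gamma$ almost fixes a point; a last application of the boundary analysis of the previous paragraph (or plain completeness, when the almost-fixed points remain bounded) upgrades this to a genuine $\Gamma$-fixed point.

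The two places I expect real difficulty are: (i) the energy analysis in a non-locally-compact target, which forces one to interlock Duchesne's fine structure theory of finite-rank CAT(0) spaces with the property (T) input, in order to dispose of escape to $\partial X$ and of invariant flats without losing control; and (ii) proving the Bochner/Poincaré estimate with the sharpened threshold, so as to reach the symplectic quadrangles over $\mathbb F_3$, $\mathbb F_4$ and $\mathbb F_5$, where the naive property (T) threshold $\lambda_1>1/2$ already fails. A further point requiring care — and itself a nontrivial consequence of Duchesne's classification — is to verify that the \emph{non-positive curvature operator} hypothesis genuinely supplies the needed Bochner inequality uniformly over all $X\in\mathcal X$; equivalently, that the obstruction to the link Poincaré inequality coming from the target (its Izeki--Nayatani invariant) is controlled by this hypothesis.
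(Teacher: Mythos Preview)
Your high-level architecture---energy of an equivariant map from the building, link spectral gaps to force constancy of a harmonic map, Property~(T) to handle escape to the boundary---matches the paper's, and your numerical thresholds are correct. But you misidentify the mechanism behind the decisive $1/3$ threshold in the $\widetilde C_2$ case, and this is a genuine gap. You attribute the improvement from $1/2$ to $1/3$ to a Bochner-type inequality supplied by the non-positive \emph{curvature operator}; the paper does no such thing, and I do not see how your route would succeed. The energy estimate in Proposition~\ref{constant} uses only the CAT(0) comparison $\|Df|_v(u)-Df|_v(w)\|\le d(f(u),f(w))$---nothing stronger. The improvement for $\mathrm{Sp}_4$ is purely \emph{combinatorial}: the $\widetilde C_2$ building has two vertex types, non-special links being complete bipartite with normalized gap $1$ and special links being generalized quadrangles with gap $1-\sqrt{2q}/(q+1)$; splitting the energy as $E=E_1+E_2$ by edge type and playing the two link inequalities against each other is what converts the naive threshold $1/2$ into $1/3$. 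An Izeki--Nayatani style argument into a Hadamard manifold has invariant zero and still yields threshold $1/2$, which, as you yourself note, fails for $q\in\{3,4,5\}$. The curvature-operator hypothesis is used in the paper for something else entirely: it feeds Duchesne's classification, which gives finite telescopic dimension (needed for \emph{existence} of the energy minimizer via the Caprace--Lytchak intersection theorem---the paper minimizes directly rather than running your flow) and the product decomposition into irreducible $X_p(\mathbb K)$-factors (needed in Section~3).

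Your boundary step (``Busemann character vanishes, descend to a horosphere, iterate'') is also too optimistic: horospheres are not CAT(0), let alone in $\mathcal X$, so there is nothing to iterate on. The paper's upgrade from a fixed point in $\overline X$ to one in $X$ (Theorem~\ref{main3}) is substantially more structured. One first reduces to a single irreducible factor via the de~Rham/Foertsch--Lytchak decomposition (this is where the curvature-operator hypothesis, via Duchesne's classification, actually enters). A fixed boundary point then lands $\Gamma$ in a parabolic $H_x$, for which one writes an explicit Levi decomposition $H_x=N_x\rtimes Z_x$: Eberlein's in the finite-dimensional case, and explicit block matrices for $\mathrm{O}_{\mathbb K}(p,\infty)$ in the infinite-dimensional case. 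The Levi factor $Z_x$ acts on a totally geodesic subspace of strictly lower rank, so Property~(F) plus induction on rank applies there; the soluble unipotent radical $N_x$ is then killed by Property~(FH) via Lemma~\ref{soluble}, which requires verifying that each abelian subquotient of $N_x$ is a Hilbert space carrying a scalar product preserved by the (now compact) image in the Levi. Your Busemann-character observation is only the first and easiest rung of this induction.
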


We now present a short outline of the paper. First, we will show (Section 2) that every continuous action of $\Gamma$ by isometries on $X\in\mathcal{X}$ has a fixed point in $\overline{X}=X\cup\partial X$, where $\partial X$ is the boundary at infinity of $X$. This is done by generalizing a result that was proved by Wang in the finite-dimensional case \cite{wang}, using a notion of harmonic functions from the set of vertices in the Bruhat-Tits building of $G$ to $X$. After this, we will show (Section 3) that if a locally compact group $\Gamma$ with Kazhdan's Property (T) has the property that whenever it acts continuously by isometries on a space $X\in\mathcal{X}$ it fixes a point in $\overline{X}$, then the group actually has to fix a point in $X$. This will allow us to conclude our proof. \\

\begin{rmk}
    In \cite{wang}, Wang introduced a notion of admissible weight on simplicial complexes and proved that, if $\Gamma$ is the fundamental group of a finite simplicial complex with an admissible weight satisfying an eigenvalue condition, and $X$ is a complete simply connected finite-dimensional Riemannian manifold of nonpositive sectional curvature (as are the finite-dimensional spaces in our family $\mathcal{X}$), every continuous isometric action of $\Gamma$ on $X$ fixes a point in $\overline{X}$ (\cite{wang}, Theorem 1.1). The list of eligible groups includes cocompact lattices in simply connected simple higher rank algebraic groups over non-archimedean local fields, provided that the cardinality of the residue field is large enough, with a bound that depends on the rank of the group (\cite{wang}, Theorem 4.3). The results in our paper allow to show that, for groups $\Gamma$ that satisfy the assumptions of \cite{wang}, every continuous isometric action on a space $X\in\mathcal{X}$ (including infinite-dimensional ones) has a fixed point in $\overline{X}$. In fact, one can follow the proof of our Theorem \ref{main2}, showing existence of harmonic functions as we do in Section \ref{exhar}, and then show that harmonic functions are constant as in Section 3 of \cite{wang}. Furthermore, if the group $\Gamma$ has Kazhdan's Property (T), as it is the case for lattices in connected almost simple higher rank algebraic groups over local fields (\cite{kazhdanT}, Theorem 1.6.1, Theorem 1.7.1), then one can use our Theorem \ref{main3} to show that $\Gamma$ has to fix a point in $X$. This allows to extend our Theorem \ref{Main} to cocompact lattices in $G$ if the cardinality of the residue field is large enough, with a bound that depends on $\mathrm{rank}_\mathbb{F}(G)$. \\
    \end{rmk}

This paper is part of my Ph.D. project under the supervision of Prof. Nicolas Monod at EPFL (Lausanne, Switzerland). I am grateful to Nicolas for proposing this project, for giving me valuable advice and feedback, and for sharing with me deep insights of the theory. I am also grateful to Bruno Duchesne, Pierre Py, and the reviewers for their useful comments and suggestions on this paper, and in particular for the suggestion to generalize the original version of the theorem. \\

\section{Fixing a point in $\overline{X}$}

In this section, we prove the following:

\begin{thm} \label{main2}
    Let $\mathbb{F}$ be a non-archimedean local field. Let $G=\mathrm{SL}_3(\mathbb{F})$ or $G=\mathrm{Sp}_4(\mathbb{F})$; in the latter case assume that $\mathbb{F}$ has characteristic zero and its residue field $k$ has at least three elements. Let $\Gamma$ be a torsion-free cocompact lattice of $G$. Let $X\in\mathcal{X}$. Then every continuous action of $\Gamma$ by isometries on $X$ fixes a point in $\overline{X}$. \\
\end{thm}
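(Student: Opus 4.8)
The plan is to adapt Wang's harmonic map technique to the infinite-dimensional setting. Since $\Gamma$ is a torsion-free cocompact lattice in $G = \mathrm{SL}_3(\mathbb{F})$ or $\mathrm{Sp}_4(\mathbb{F})$, it acts freely, properly, and cocompactly on the Bruhat–Tits building $B$ of $G$, which is a locally finite $\mathrm{CAT}(0)$ simplicial complex of dimension $2$ (a thick Euclidean building of type $\tilde A_2$ or $\tilde C_2$). Let $V$ denote its vertex set. Given a continuous isometric action of $\Gamma$ on $X \in \mathcal{X}$, pick a point $x_0 \in X$ and, using the fact that $X$ is complete $\mathrm{CAT}(0)$ (so closed balls are convex and centroids/barycenters of bounded sets exist), one wants to produce a $\Gamma$-equivariant map $\varphi\colon V \to X$ that is \emph{harmonic} with respect to an admissible weight on the link structure of $B$, in the sense that the value $\varphi(v)$ is the weighted barycenter (circumcenter of a suitable measure) of the values $\varphi(w)$ on the neighbors $w$ of $v$. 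The first step is to set up this notion of harmonic map into a $\mathrm{CAT}(0)$ Hilbert manifold, carefully checking that the relevant energy functional is convex and that barycenters of finitely supported measures are well-defined and vary continuously; this replaces Wang's Riemannian barycenter with the $\mathrm{CAT}(0)$ circumcenter/barycenter construction, which works verbatim in infinite dimensions.

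Next I would establish \emph{existence} of an equivariant harmonic (or approximately harmonic) map. Because the action of $\Gamma$ on $V$ is cofinite, an equivariant map is determined by finitely many points of $X$, and one minimizes the total (finite) energy $E(\varphi) = \sum_{[v,w]} c_{vw}\, d(\varphi(v),\varphi(w))^2$ over the fundamental domain. If a minimizer exists it is harmonic by the first variation formula. The subtlety is that $X$ is only a Hilbert manifold, so closed bounded sets need not be compact and the infimum may fail to be attained; here one either (a) runs a discrete heat-flow / iterated-barycenter scheme and shows it converges, or (b) falls back on the dichotomy that either the energy infimum is attained — giving a genuine harmonic map — or there is an energy-decreasing sequence escaping every bounded set, which by a standard argument (as in the theory of harmonic maps to $\mathrm{CAT}(0)$ spaces, or the $\Delta$-convergence machinery) yields a fixed point of $\Gamma$ in the boundary $\partial X$. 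This dichotomy is exactly what produces a fixed point in $\overline{X} = X \cup \partial X$ rather than in $X$ alone, matching the statement. Crucially, this step does \emph{not} yet invoke Property (T) or any spectral/eigenvalue hypothesis — that is deferred to the argument (in Section 3) upgrading a fixed point in $\overline X$ to one in $X$; here the weaker conclusion is genuinely all we can get.

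I expect the main obstacle to be the \textbf{existence and regularity of the equivariant harmonic map in infinite dimensions}, specifically the lack of local compactness of $X$. In the finite-dimensional case Wang can use properness of the Riemannian manifold and standard elliptic theory; here one must argue purely synthetically. The cleanest route is probably to work with the sublevel sets of the energy functional on the (finite product of) copies of $X$ indexed by a fundamental domain for $\Gamma$ on $V$: if these are bounded, then since $X \in \mathcal{X}$ is $\mathrm{CAT}(0)$ and complete, bounded closed convex sets have circumcenters and one can extract a minimizing configuration via weak ($\Delta$-) convergence, whose weak limit minimizes energy by weak lower semicontinuity of the convex functional $d(\cdot,\cdot)^2$; and if they are unbounded, a Busemann-function argument along an energy-minimizing escaping sequence pins down a point of $\partial X$ fixed by all of $\Gamma$. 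The finite rank of $X$ should be what guarantees the boundary $\partial X$ is well-behaved enough (it is a spherical building-like object of finite dimension) for the escaping case to be controlled, and I would keep the characteristic-zero and residue-field hypotheses in reserve only insofar as they are needed to guarantee $\Gamma$ exists and the building $B$ is thick enough for the later (Property (T) / eigenvalue) arguments — they play no role in this section beyond that.
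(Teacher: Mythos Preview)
Your proposal has a genuine gap: you omit the step showing that an equivariant harmonic map, once it exists, must be \emph{constant}. The dichotomy you set up is ``either a harmonic map exists, or we get a fixed point in $\partial X$'', but in the first branch you never produce a fixed point at all --- a non-constant harmonic map gives no fixed point in $\overline{X}$. The paper's argument (following Wang) uses the first-variation identity at each vertex together with the Garland/Feit--Higman spectral gap on the links $Lk(v)$ to prove $E(f)=0$, hence $f$ constant, hence a fixed point in $X$. You explicitly say the spectral/eigenvalue hypothesis is ``deferred to Section~3'', but this is wrong: the link-eigenvalue inequality is the heart of \emph{this} section, and the residue-field hypothesis $|k|\ge 3$ in the $\mathrm{Sp}_4$ case is consumed precisely here (it makes $2(q+1-\sqrt{2q}) > q+1$ hold), not in Section~3. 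Without this step your first branch is empty and the proof is incomplete.

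On the existence side, your outline (weak/$\Delta$-convergence for bounded sublevel sets, Busemann functions for escaping sequences) is in the right spirit but vaguer than what the paper does. The paper exploits that $X$ --- and hence the finite product $X^n$ parametrizing equivariant maps --- has finite \emph{telescopic dimension}, and applies the Caprace--Lytchak dichotomy for descending chains of closed convex sets: either the energy sublevel sets $S_m$ intersect (giving a minimizer), or $\bigcap_m \partial S_m$ is non-empty of intrinsic radius $\le \pi/2$ in $\partial(X^n)$, and a Balser--Lytchak circumcenter argument then gives a $\Gamma$-fixed point in $\partial X$. This replaces your ad hoc escaping-sequence/Busemann argument with a clean black box and is where the finite-rank hypothesis is actually cashed in.
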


This result was proved by Wang in \cite{wang} in the case where $X$ is a finite-dimensional Riemannian manifold of non-positive sectional curvature, with an assumption that turns out to be more restrictive on the residue field. The proof of our Theorem \ref{main2} will closely follow the proof in \cite{wang}, with some adaptations: most importantly, we need to take into account the fact that our space $X\in\mathcal{X}$ might be infinite-dimensional. \\

\subsection{The building of $G$ and the eigenvalue property}

First, we introduce the Bruhat-Tits building $\Sigma$ of the group $G$. Bruhat-Tits buildings were first introduced in \cite{bruhattits}; more modern and readable treatments can be found in the books \cite{buildings,garrett}. In particular, we refer to Chapter 6.9 of \cite{buildings} and Chapter 19 of \cite{garrett} for the building of $\mathrm{SL}_3(\mathbb{F})$, and to Chapter 20.1 of \cite{garrett} for the building of $\mathrm{Sp}_4(\mathbb{F})$. We also refer to the more recent papers \cite{buildspn,expspn,zetasp4} for a more detailed treatment of the building of $\mathrm{Sp}_4(\mathbb{F})$. \\

The Bruhat-Tits building $\Sigma$ of $G=\mathrm{SL}_3(\mathbb{F})$ is a Euclidean building of dimension two on which $G$ acts by isometries. It is a union of infinitely many Euclidean planes (apartments), each of them being tessellated by equilateral triangles (chambers). The action of $G$ is transitive on the set of pairs $(\mathcal{A},\mathcal{C})$ where $\mathcal{A}$ is an apartment and $\mathcal{C}$ is a chamber contained in $\mathcal{A}$. The situation is the same in the case $G=\mathrm{Sp}_4(\mathbb{F})$, except that the chambers are right isosceles triangles instead of equilateral. The vertices are divided into two categories: special and non-special, as described in \cite{buildspn,expspn,zetasp4}. Each chamber has two special vertices (at the $45^\circ$ angles) and one non-special vertex (at the $90^\circ$ angle). \\

As $\Sigma$ is a two-dimensional simplicial complex, for $i=0,1,2$ we may call $\Sigma(i)$ the set of $i$-simplices in $\Sigma$. The action of $G$ on $\Sigma(0)$ has three orbits, and each $2$-simplex $\sigma\in\Sigma(2)$ has exactly one vertex in each orbit. \\

For every vertex $v\in\Sigma(0)$, we can define its \textit{link graph} $Lk(v)$. It is a graph whose vertices are the vertices in $\Sigma(0)$ adjacent to $v$ (i.~e.~connected to $v$ by an edge in $\Sigma(1)$), and where two vertices $u,w$ are connected by an edge if and only if there is a chamber in $\Sigma(2)$ with vertices $v,u,w$. We call $Lk(v)(0),Lk(v)(1)$ the set of vertices and edges of $Lk(v)$, respectively. \\

We describe the structure of the graph $Lk(v)$. In the case $G=\mathrm{SL}_3(\mathbb{F})$, as mentioned in \cite{buildings} (6.9), it is the flag complex of $\mathrm{SL}_3(k)$, where $k$ is a finite field: the residue field of $\mathbb{F}$. This means that the vertices are points and lines in the projective plane over $k$, and whenever a point is contained in a line, the corresponding vertices are connected by an edge. If $q$ is the cardinality of the field $k$ (a power of a prime number), there are $2(q^2+q+1)$ vertices in total: $q^2+q+1$ points of $\mathbb{P}^2(k)$ and $q^2+q+1$ lines. Every vertex has degree $q+1$. Using the language of generalized polygons \cite{feithigman}, the graph is a generalized triangle with $q+1$ lines through each point and $q+1$ points on each line. \\

In the case $G=\mathrm{Sp}_4(\mathbb{F})$, an analysis of the neighboring vertices of a vertex $v$ in the building was carried out in \cite{zetasp4} (2.5). If $v$ is special, $Lk(v)$ is isomorphic to the flag complex of $\mathrm{Sp}_4(k)$, where $k$ is the residue field of $\mathbb{F}$. The vertices are points and isotropic lines in $\mathbb{P}^3(k)$, and there are $2(q^3+q^2+q+1)$ vertices in total: $q^3+q^2+q+1$ points and $q^3+q^2+q+1$ isotropic lines. Every vertex has degree $q+1$, and the graph is a generalized quadrangle with $q+1$ lines through each point and $q+1$ points on each line. \\

If $v$ is a non-special vertex in the building of $G=\mathrm{Sp}_4(\mathbb{F})$, the graph $Lk(v)$ is a complete bipartite graph with $q+1$ vertices on each side, or a generalized $2$-gon with $q+1$ points and $q+1$ lines. \\

We want to show a property of the graph $Lk(v)$ that corresponds to the first eigenvalue property in \cite{wang}. Let $v\in\Sigma(0)$ and let $Lk(v)$ be its link graph. Let $N$ be the number of vertices and $M$ be the number of edges in $Lk(v)$. After choosing an orientation for every edge in $Lk(v)(1)$, for every $f: Lk(v)(0)\rightarrow\mathbb{R}$ we can define $df: Lk(v)(1)\rightarrow\mathbb{R}$ by $df((u,w))=f(u)-f(w)$. We let $||f||^2=\sum_{u\in Lk(v)(0)}|f(u)|^2$ and $||df||^2=\sum_{(u,w)\in Lk(v)(1)}|f((u,w))|^2$. We want to establish a lower bound for $||df||^2$ in terms of $||f||^2$, for functions that satisfy $\sum_{u\in Lk(v)(0)}{f(u)}=0$. \\

Let $A$ be the adjacency matrix of $Lk(v)$. More explicitly, it is a matrix with $M$ rows and $N$ columns where, after an ordering of the vertices and of the edges has been chosen, the coefficient $A_{ij}$ is $1$ if the $i$-th edge starts from the $j$-th vertex, $-1$ if the $i$-th edge terminates at the $j$-th vertex, and $0$ in all other cases. Then we can consider the function $f$ as a vector in $\mathbb{R}^N$ and the function $df$ as a vector in $\mathbb{R}^M$, so that we have $df=Af$. Therefore, we have $$||df||^2=\langle df,df\rangle=\langle Af,Af\rangle=\langle f,A^TAf\rangle,$$ where we use the standard scalar product in $\mathbb{R}^M$ and in $\mathbb{R}^N$. \\

In order to bound $\langle A^TAf,f\rangle$ in terms of $\langle f,f\rangle=||f||^2$, we need to study the eigenvalues of the matrix $B=A^TA$. It is a $N\times N$ square matrix where the coefficient $B_{ij}$ is $q+1$ if $i=j$, $-1$ if $i\neq j$ and the $i$-th and $j$-th vertex are connected by an edge, $0$ if $i\neq j$ and the $i$-th and $j$-th vertex are not connected. \\

The calculation of minimal polynomials of the incidence matrices of generalized polygons has been carried out by Feit and Higman \cite{feithigman}. We also refer to \cite{garland}, Proposition 7.10, for the complete list of eigenvalues in an equivalent setting. In our setting, the smallest non-zero eigenvalue of the matrix $B$ is:

\begin{itemize}
    \item $q+1-\sqrt{q}$, if $v$ is a vertex in the building of $\mathrm{SL}_3(\mathbb{F})$;
    \item $q+1-\sqrt{2q}$, if $v$ is a special vertex in the building of $\mathrm{Sp}_4(\mathbb{F})$;
    \item $q+1$, if $v$ is a non-special vertex in the building of $\mathrm{Sp}_4(\mathbb{F})$.
\end{itemize}

We may call $\lambda(v)$ the smallest non-zero eigenvalue of the matrix $B$ built from the link graph of the vertex $v$, as in the above list. As it is easy to see that the eigenspace corresponding to the eigenvalue $0$ is the space of constant functions on $Lk(v)(0)$, the following proposition holds:

\begin{prp} \label{eigen}
    If $f: Lk(v)(0)\rightarrow\mathbb{R}$ satisfies $\sum_{u\in Lk(v)(0)}{f(u)}=0$, then $||df||^2\geq\lambda(v)||f||^2$. \\
\end{prp}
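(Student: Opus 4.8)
The plan is to reduce Proposition \ref{eigen} to a statement about the spectrum of the symmetric matrix $B = A^T A$, which has already been identified explicitly in terms of the Feit--Higman eigenvalue computations for generalized polygons. First I would record that $B$ is positive semidefinite, since $\langle Bf, f\rangle = \langle Af, Af\rangle = \|df\|^2 \geq 0$ for every $f$, so all its eigenvalues are nonnegative real numbers; being real symmetric, $B$ is diagonalizable in an orthonormal basis of eigenvectors.

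The second step is to pin down the kernel of $B$. Since $\langle Bf,f\rangle = \|df\|^2$, a vector $f$ lies in $\ker B$ if and only if $df = 0$, i.e.\ $f$ is constant on every connected component of $Lk(v)$. In all three cases listed (the flag complex of $\mathrm{SL}_3(k)$, the flag complex of $\mathrm{Sp}_4(k)$, and the complete bipartite graph $K_{q+1,q+1}$), the link graph $Lk(v)$ is connected, so $\ker B$ is exactly the one-dimensional space of constant functions. Therefore the orthogonal complement of the constants in $\mathbb{R}^N$, which is precisely the hyperplane $\{f : \sum_{u} f(u) = 0\}$, is $B$-invariant and $B$ restricts to a positive definite operator on it. Its smallest eigenvalue there is by definition the smallest \emph{nonzero} eigenvalue $\lambda(v)$ of $B$, whose value in each of the three cases is read off from the Feit--Higman computation (or from \cite{garland}, Proposition 7.10).

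The final step is the Rayleigh quotient estimate: for any $f$ with $\sum_u f(u) = 0$, expanding $f = \sum_i c_i e_i$ in an orthonormal eigenbasis $(e_i)$ of the restriction of $B$ to this hyperplane, with eigenvalues $\mu_i \geq \lambda(v)$, gives
\[
\|df\|^2 = \langle Bf, f\rangle = \sum_i \mu_i c_i^2 \geq \lambda(v) \sum_i c_i^2 = \lambda(v)\|f\|^2,
\]
which is the claimed inequality. I do not expect any serious obstacle here; the only point requiring a little care is the connectedness of $Lk(v)$ (needed to guarantee that the eigenvalue $0$ has multiplicity exactly one, so that $\lambda(v)$ as defined is genuinely the relevant bound on the complement of the constants), and this is immediate from the explicit descriptions of the link graphs given above. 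The substantive input — the precise list of eigenvalues — has already been supplied by Feit--Higman and Garland and is simply being quoted.
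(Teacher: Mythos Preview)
Your proposal is correct and follows essentially the same approach as the paper: the paper sets up $B=A^TA$, quotes the Feit--Higman/Garland eigenvalue list, remarks that the zero eigenspace consists of the constant functions, and declares the proposition to follow. You simply spell out two points the paper leaves implicit --- the connectedness of $Lk(v)$ (to ensure $\ker B$ is exactly the constants) and the Rayleigh quotient step --- but the argument is the same.
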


\begin{rmk} \label{eigen2}
    The function $f$ in Proposition \ref{eigen} takes values in $\mathbb{R}$, but the result also holds when it takes values in a general Hilbert space. To see this, it suffices to restrict to the finite-dimensional subspace spanned by the (finitely many) values of $f$, then apply the lemma component-wise. \\
\end{rmk}

\subsection{Existence of harmonic functions} \label{exhar}

We now define the most important tool for the proof of Theorem \ref{main2}: harmonic functions on $\Sigma(0)$, which are $\Gamma$-equivariant functions from $\Sigma(0)$ to $X$ that minimize an energy functional. \\

An approach with harmonic functions was also used in \cite{duc2015sup}, where Duchesne proved a superrigidity result for cocompact lattices in higher rank semisimple Lie groups acting on spaces in the family $\mathcal{X}$. \\

Recall that $\Gamma$ is a discrete, torsion-free, cocompact subgroup of $G$. Its action on $\Sigma$ is free and proper (\cite{garland}, Lemma 2.6). Therefore, $\Gamma$ is the fundamental group of the finite simplicial complex $\sigma=\Sigma/\Gamma$, and $\Gamma$-equivariant functions $f: \Sigma(0)\rightarrow X$ correspond to functions $f_\sigma: \sigma(0)\rightarrow X$, where $\sigma(0)$ is the set of vertices in $\sigma$. The correspondence is defined by choosing a fundamental domain $D$ of $\Gamma$ in $\Sigma(0)$ and assigning to $f_\sigma$ the values that $f$ takes at the vertices in $D$. Since $f$ is $\Gamma$-equivariant, all its values are uniquely determined by those at the vertices in $D$ and hence by $f_\sigma$. \\

\begin{dfn}{(Energy functional)} \label{energy}
    The \textit{energy functional} of a $\Gamma$-equivariant function $f: \Sigma(0)\rightarrow X$ is defined by $$E(f):=\sum_{(u,w)\in\sigma(1)}{d(f(\widetilde{u}),f(\widetilde{w}))^2},$$ where $\sigma(1)$ is the set of edges in the $1$-skeleton of $\sigma$, and $\widetilde{u},\widetilde{w}\in\Sigma(0)$ are vertices connected by an edge in $\Sigma(1)$ that project to $u,w\in\sigma(0)$. Since $f$ is $\Gamma$-equivariant and the action of $\Gamma$ on $X$ is by isometries, $d(f(\widetilde{u}),f(\widetilde{w}))$ does not depend on the choice of $\widetilde{u},\widetilde{w}$. \\
\end{dfn}

\begin{dfn}{(Harmonic function)}
    We say that the function $f$ is \textit{harmonic} if it minimizes the energy functional among all $\Gamma$-equivariant functions $\Sigma(0)\rightarrow X$. \\
\end{dfn}

The first step of our proof of Theorem \ref{main2} is the following, which we prove in the rest of this subsection:

\begin{prp} \label{existence}
    If the action of $\Gamma$ does not fix any point in the visual boundary $\partial X$, then there exists at least one harmonic function $f: \Sigma(0)\rightarrow X$. \\
\end{prp}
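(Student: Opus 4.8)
The plan is to obtain a harmonic function as a minimizer of the energy functional $E$ via a standard direct-method argument, with the boundary hypothesis supplying the coercivity needed to prevent a minimizing sequence from escaping to infinity. Because $X$ may be infinite-dimensional, we cannot rely on local compactness of $X$; instead we will exploit that $X\in\mathcal{X}$ is a complete $\mathrm{CAT}(0)$ space, so that closed bounded convex sets are weakly compact in the sense of the circumcenter/weak topology on $\mathrm{CAT}(0)$ spaces, together with the fact that $\sigma(0)$ is a finite set, so that a $\Gamma$-equivariant $f$ is encoded by a point of the finite product $X^{\sigma(0)}$.

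First I would fix a torsion-free cocompact lattice $\Gamma<G$ and the finite quotient complex $\sigma=\Sigma/\Gamma$, and identify the space of $\Gamma$-equivariant maps $f\colon\Sigma(0)\to X$ with $X^{\sigma(0)}$, on which $E$ is the finite sum $E(f)=\sum_{(u,w)\in\sigma(1)} d(f(\widetilde u),f(\widetilde w))^2$. Since $E\geq 0$, set $m=\inf E$ and pick a minimizing sequence $(f_n)$. The key step is to show this sequence stays bounded: supposing not, one rescales, looks at the displacement of a basepoint, and uses that $X$ has non-positive curvature to pass to a limiting ray; the endpoint of such a ray at $\partial X$ would have to be $\Gamma$-fixed (the energy gradient condition forces the escape direction to be $\Gamma$-invariant along every edge of the connected finite complex $\sigma$, hence globally), contradicting the hypothesis that $\Gamma$ fixes no point of $\partial X$. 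Concretely I would argue: if $\sup_n d(f_n(v_0),f_n(\gamma v_0))=\infty$ for some vertex $v_0$ and $\gamma\in\Gamma$, then after extracting a subsequence the rescaled configurations converge (in the asymptotic cone, or directly via the $\mathrm{CAT}(0)$ comparison and convexity of the metric) to a nonconstant $\Gamma$-equivariant "direction field" that is harmonic for the linearized energy; finiteness and connectedness of $\sigma$ force this field to be a single point $\xi\in\partial X$ fixed by all of $\Gamma$. Once $(f_n)$ is bounded, its image lies in a closed bounded — hence, taking the closed convex hull, closed bounded convex — subset $C\subset X$; by the weak compactness of such sets in complete $\mathrm{CAT}(0)$ spaces (or by a circumcenter argument on the sequence of tails), a subsequence converges weakly to some $f\in C^{\sigma(0)}$, and since the squared-distance function on a $\mathrm{CAT}(0)$ space is convex and lower semicontinuous for the weak topology, $E(f)\leq\liminf_n E(f_n)=m$, so $f$ is harmonic.

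I would be careful about two technical points: in the Hilbert-manifold case $X$ need not be complete as a metric space a priori, but spaces in $\mathcal{X}$ are complete simply connected Riemannian/Hilbert manifolds of non-positive curvature, hence $\mathrm{CAT}(0)$ and complete, so Bruhat–Tits-type barycenter and weak-compactness tools apply; and the lower semicontinuity of $E$ must be taken with respect to the same (weak) topology used for compactness, which is exactly the content of convexity and closedness of sublevel sets of $d(\cdot,\cdot)^2$. The main obstacle is the no-escape-to-infinity step: in infinite dimensions a bounded minimizing sequence need not be relatively compact, and an unbounded one need not converge after rescaling in any naive sense, so the argument that an unbounded minimizing sequence produces a $\Gamma$-fixed boundary point has to be done via the visual boundary $\partial X$ directly — tracking, for each pair of adjacent vertices, the direction in which $f_n$ drifts, showing these directions are asymptotically parallel because the edge-distances stay comparable along a minimizing sequence, and assembling them into one well-defined fixed point at infinity. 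Everything else (identification with $X^{\sigma(0)}$, convexity of $E$, extraction of weak limits) is routine $\mathrm{CAT}(0)$ geometry.
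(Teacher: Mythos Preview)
Your overall architecture (identify equivariant maps with $X^n$, minimize the convex functional $E$, and blame any failure on a $\Gamma$-fixed boundary point) is the same as the paper's, but the execution of the crucial step is different and, as written, has a genuine gap in the infinite-dimensional case.

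First a small but telling slip: the quantity $d(f_n(v_0),f_n(\gamma v_0))$ is \emph{always} bounded along a minimizing sequence. Indeed $E(f_n)$ bounded forces all edge-lengths $d(f_n(\widetilde u),f_n(\widetilde w))$ to be bounded, and since $\sigma$ is finite and connected, $d(f_n(v_0),f_n(\gamma v_0))$ is controlled by finitely many such edge-lengths. So ``unbounded'' can only mean that $d(f_n(v_0),x_0)\to\infty$ for a fixed basepoint $x_0\in X$, not that the $\Gamma$-displacements blow up.

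The real problem is the extraction of a boundary point from an unbounded minimizing sequence. In infinite dimensions neither $\overline{X}$ nor $\partial X$ is compact in the cone topology, so you cannot simply pass to a subsequence and get a limiting ray; your own last paragraph flags this but does not resolve it. The phrases ``rescaled configurations converge in the asymptotic cone'' and ``assembling asymptotically parallel directions into one well-defined fixed point at infinity'' describe what you \emph{want}, not a mechanism that produces it. Some finiteness input beyond $\mathrm{CAT}(0)$ is needed here, and you never invoke one.

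The paper supplies exactly that missing input. It uses that $X\in\mathcal{X}$ has \emph{finite telescopic dimension} (Duchesne), hence so does $X^n$, and then applies the Caprace--Lytchak dichotomy for filtering families of closed convex sets in such spaces: for the sublevel sets $S_m=\{E\le I+1/m\}$, either $\bigcap_m S_m\neq\emptyset$ (and any point there is a minimizer), or $\bigcap_m\partial S_m\subset\partial(X^n)$ is nonempty of intrinsic radius $\le\pi/2$. In the second alternative, $\Gamma$ preserves $E$ and hence each $S_m$, so the intersection of boundaries is $\Gamma$-invariant; since $\partial(X^n)$ has finite geometric dimension, the Balser--Lytchak circumcenter of this set is a $\Gamma$-fixed point of $\partial(X^n)$, which unravels (via the spherical-join description of $\partial(X^n)$) to a $\Gamma$-fixed point of $\partial X$. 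This sidesteps entirely the need to extract convergent subsequences of points or directions.

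Your bounded-case argument via weak/$\Delta$-convergence in $\mathrm{CAT}(0)$ spaces is reasonable and can be made to work, but note that the paper's approach does not need to split into bounded and unbounded cases at all: the Caprace--Lytchak theorem handles both simultaneously through the sublevel sets. If you want to rescue your route, the cleanest fix is precisely to replace the ad hoc ``escape to infinity'' analysis by the finite-telescopic-dimension machinery on the nested convex sets $S_m$.
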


Since a $\Gamma$-equivariant function $f$ is determined by its values on the fundamental domain $D$ for the $\Gamma$-action on $\Sigma(0)$, we can consider the energy functional as a continuous function $E: X^n\rightarrow\mathbb{R}_{\geq 0}$, where $n$ is the cardinality of $D$. The function $E$ is bounded below by zero, and Proposition \ref{existence} is equivalent to $E$ having a minimum. Therefore, if $I\geq 0$ is the infimum of $E$ in $X^n$, we need to show that $E(x)=I$ for some $x\in X^n$. \\

We are going to use a notion of dimension at large scale for our spaces $X\in\mathcal{X}$: the notion of telescopic dimension, which was defined in \cite{caply} as the supremum of geometric dimensions of asymptotic cones built from $X$. The notion of geometric dimension was defined inductively in \cite{kleiner} by setting the dimension of a discrete space to be $0$ and defining $\dim(X)=\sup\{\dim(S_x X)+1 | x\in X\}$, where $S_x X$ denotes the space of directions at the point $x$; this definition coincides with the supremum of topological dimensions of compact subsets in $X$. \\

It is proved in \cite{duc2015} (Corollary 1.10) that any space $X\in\mathcal{X}$ has finite telescopic dimension. As the product of CAT(0) spaces is still CAT(0), so is the space $X^n$; moreover, since the space $X$ has finite telescopic dimension and this property is preserved by taking products (\cite{caply}, Lemma A.12), the space $X^n$ has finite telescopic dimension. \\

For all $m\in\mathbb{N}$, let $S_m:=\{x\in X^n:\ E(x)\leq I+1/m\}$. Since the energy functional is convex (the distance function in a CAT(0) space is convex), the sets $(S_m)_{m\in\mathbb{N}}$ are a descending chain of closed convex subsets of $X^n$. It is proved in \cite{caply} (Theorem 1.1) that in this case either the intersection $\bigcap_{m\in\mathbb{N}}{S_m}$ is non-empty or the intersection of the visual boundaries $\bigcap_{m\in\mathbb{N}}{\partial S_m}$ is a non-empty subset of $\partial(X^n)$ with intrinsic radius $\leq\pi/2$. \\

If the intersection $\bigcap_{m\in\mathbb{N}}{S_m}$ is non-empty, any $x$ in this intersection must satisfy $E(x)=I$ and hence we are done. Therefore, from now on, we can assume that $\bigcap_{m\in\mathbb{N}}{\partial S_m}$ is non-empty with intrinsic radius $\leq\pi/2$. \\

The action of $\Gamma$ on $X$ extends diagonally to an action on $X^n$. If we show that there exists $\xi\in\partial(X^n)$ fixed by this action, then this implies that $\Gamma$ fixes a point in $\partial X$, since the boundary $(\partial X)^n$ is the spherical join of the boundaries of the factors (\cite{bh}, II.9) and therefore $\xi$ corresponds to a $n$-tuple $(\xi_1,\ldots,\xi_n)\in(\partial X)^n$ together with weights $(\lambda_1,\ldots,\lambda_n)$ satisfying $\sum_{i=1}^{n}{\lambda_i^2}=1$, and $\xi_i\in\partial X$ must be fixed by $\Gamma$ whenever $\lambda_i\neq 0$. Therefore, to complete the proof of Proposition \ref{existence}, it suffices to show:

\begin{prp}
    If the intersection $\bigcap_{m\in\mathbb{N}}{\partial S_m}$ is non-empty with intrinsic radius $\leq\pi/2$, then there exists $\xi\in\partial(X^n)$ fixed by the action of $\Gamma$.
\end{prp}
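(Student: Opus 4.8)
The plan is to prove the apparently stronger statement that \emph{every} point of $Y:=\bigcap_{m}\partial S_m$ is fixed by the diagonal action of $\Gamma$ on $\partial(X^n)$; since $Y$ is non-empty by hypothesis, any $\xi\in Y$ then works. (The hypothesis on the intrinsic radius is inherited from Caprace--Lytchak's theorem; the argument below only uses that $Y\neq\emptyset$.)

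I would first fix coordinates. With $D=\{v_1,\ldots,v_n\}$ as above, for each oriented edge $e=(u,w)$ of $\sigma$ choose the lift $(\widetilde{u},\widetilde{w})\in\Sigma(1)$ with $\widetilde{u}=v_{i(e)}$, and let $\gamma_e\in\Gamma$ be the unique element with $\widetilde{w}=\gamma_e\, v_{j(e)}$ (unique because $\Gamma$ acts freely on $\Sigma$). Under the correspondence recalled above between points of $X^n$ and $\Gamma$-equivariant maps, the energy reads $E(x_1,\ldots,x_n)=\sum_{e\in\sigma(1)}d(x_{i(e)},\,\gamma_e\cdot x_{j(e)})^2$. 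Recall also that $\partial(X^n)$ is the $n$-fold spherical join of copies of $\partial X$ (\cite{bh}, II.9): a point $\xi$ is a pair $((\xi_1,\ldots,\xi_n),(\lambda_1,\ldots,\lambda_n))$ with $\lambda_i\geq 0$ and $\sum_i\lambda_i^2=1$, where $\xi_i$ is forgotten when $\lambda_i=0$; a geodesic ray asymptotic to $\xi$ is precisely a product ray $t\mapsto(c_1(t),\ldots,c_n(t))$ in which $c_i$ is a geodesic ray of speed $\lambda_i$ with endpoint $\xi_i$ (constant when $\lambda_i=0$), and any two such rays stay at bounded distance from one another. Finally, $\sigma$ is connected (a quotient of the connected complex $\Sigma$), and $\pi_1(\sigma^{(1)})$ surjects onto $\pi_1(\sigma)=\Gamma$, so every element of $\Gamma$ is represented by a loop in $\sigma^{(1)}$ based at $v_1$.

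Now let $\xi=((\xi_i),(\lambda_i))\in Y$. Since $Y\subseteq\partial S_1$, there is a geodesic ray $r(t)=(c_1(t),\ldots,c_n(t))$ asymptotic to $\xi$ along which $E\leq I+1$; hence for every edge $e$ the function $t\mapsto d(c_{i(e)}(t),\,\gamma_e\cdot c_{j(e)}(t))$ is bounded by $\sqrt{I+1}$. Here $\gamma_e\cdot c_{j(e)}$ is the image of $c_{j(e)}$ under the isometry $\gamma_e$, hence a geodesic ray of the same speed $\lambda_{j(e)}$ and endpoint $\gamma_e\cdot\xi_{j(e)}$. If one of $\lambda_{i(e)},\lambda_{j(e)}$ vanished, the other ray would be non-constant and would run to infinity away from the constant one, violating the bound; so either both vanish or, both being positive, the two rays stay at bounded distance and therefore have equal speed — in all cases $\lambda_{i(e)}=\lambda_{j(e)}$. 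As $\sigma^{(1)}$ is connected and $\sum_i\lambda_i^2=1\neq 0$, this forces $\lambda_1=\cdots=\lambda_n=n^{-1/2}$, so all speeds are positive; consequently the bound also gives $\xi_{i(e)}=\gamma_e\cdot\xi_{j(e)}$ for every edge $e$ (two geodesic rays of equal positive speed staying at bounded distance share an endpoint in $\partial X$). Reading this relation around a loop in $\sigma^{(1)}$ based at $v_1$ gives $g\cdot\xi_1=\xi_1$ for the associated $g\in\Gamma$, and such $g$ exhaust $\Gamma$, so $\xi_1\in\partial X$ is fixed by $\Gamma$; propagating the relation along paths from $v_1$ (using that $\xi_1$ is $\Gamma$-fixed) then gives $\xi_1=\cdots=\xi_n=:\eta$. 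Hence $\xi=((\eta,\ldots,\eta),(n^{-1/2},\ldots,n^{-1/2}))$ with $\eta$ a $\Gamma$-fixed point of $\partial X$, and such a $\xi$ is visibly fixed by the diagonal action of $\Gamma$ on $\partial(X^n)$.

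The only step demanding care is the piece of CAT(0) geometry invoked above — that geodesic rays in the product $X^n$ factor as products of geodesic rays of the factors, and that two geodesic rays staying at bounded distance have equal speed and, in positive speed, a common endpoint — applied to the possibly infinite-dimensional $X\in\mathcal X$. These facts are routine for finite-dimensional Hadamard manifolds, and they remain valid here because $X$, and hence $X^n$, is a complete CAT(0) space: the factorisation of geodesics is a general property of CAT(0) products, while the rigidity statement follows from convexity of the distance function (unequal speeds make the distance grow linearly; in equal positive speed the unit-speed reparametrisations are asymptotic, hence converge to the same boundary point). Once this is granted the proposition follows; together with the dichotomy of the previous paragraph it also completes the proof of Proposition~\ref{existence}.
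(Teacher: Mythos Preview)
Your proof is correct and takes a genuinely different route from the paper's. The paper first argues that the diagonal $\Gamma$-action on $X^n$ preserves $E$, hence each $S_m$, each $\partial S_m$, and therefore $Y=\bigcap_m\partial S_m$; it then invokes the finite geometric dimension of the Tits boundary and the Balser--Lytchak circumcenter lemma to extract a $\Gamma$-fixed circumcenter of $Y$ (this is where the intrinsic-radius hypothesis enters). You bypass both the invariance step and the circumcenter machinery: realising any $\xi\in\partial S_1$ by a geodesic ray lying in the sublevel set, you read the uniform bound $d\bigl(c_{i(e)}(t),\,\gamma_e\,c_{j(e)}(t)\bigr)\leq\sqrt{I+1}$ edge by edge, force all weights to equal $n^{-1/2}$ and all directions to satisfy $\xi_{i(e)}=\gamma_e\,\xi_{j(e)}$, and finish with a covering-space argument on the $1$-skeleton of $\sigma$. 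This is more elementary, uses neither the radius hypothesis nor any finite-dimension property of the boundary, and in fact proves the sharper statement that \emph{every} point of $Y$ --- indeed of $\partial S_1$ --- is already fixed by the diagonal action. The paper's approach is a quick appeal to existing structure theory once invariance of the $S_m$ is in hand; your direct analysis trades that black box for a completely explicit description of what the boundary of a sublevel set looks like.
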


\begin{proof}
    Since $\Gamma$ acts by isometries on $X$, it is clear from Definition \ref{energy} that we have $E(gf)=E(f)$ for every $g\in\Gamma$ and for every $\Gamma$-equivariant function $f: \Sigma(0)\rightarrow X$. This means that every $\Gamma$ preserves the energy function $E$ on $X^n$ and therefore each of the energy level subsets $S_m\subset X^n$ is $\Gamma$-invariant. Therefore, each of the subsets $\partial S_m\subset\partial(X^n)$ is also $\Gamma$-invariant, and the same is true for their intersection $\bigcap_{m\in\mathbb{N}}{\partial S_m}$. Since the CAT(0) space $X^n$ has finite telescopic dimension, its visual boundary $\partial(X^n)$ endowed with the Tits metric has finite geometric dimension (\cite{caply}, Proposition 2.1). Therefore, by (\cite{bally}, Proposition 1.4), the $\Gamma$-invariant subset $\bigcap_{m\in\mathbb{N}}{\partial S_m}$ has a circumcenter that is fixed by $\Gamma$. This proof was inspired by a comment of a reviewer and by the proof of Proposition 1.8 in \cite{caply}. \\
\end{proof}

\subsection{Harmonic functions are constant}

Our next goal is to show the following:

\begin{prp} \label{constant}
    Let $f: \Sigma(0)\rightarrow X$ be harmonic. Then $f$ is constant.
\end{prp}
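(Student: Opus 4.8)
The plan is to adapt Wang's argument (Section 3 of \cite{wang}) that harmonic maps into non-positively curved spaces are constant, using the eigenvalue estimate of Proposition \ref{eigen} together with the higher-rank structure of the building $\Sigma$. The starting point is the first-variation/Euler--Lagrange condition for a harmonic $f\colon\Sigma(0)\to X$. Fixing a vertex $v\in D$ and varying only the value $f(v)$ while keeping $f$ $\Gamma$-equivariant, the energy restricted to this one variable is a sum of squared distances $\sum_{u\sim v} d(f(v),f(u))^2$ over the neighbours $u$ of $v$ in $\Sigma$; since $X$ is CAT(0) this is a strictly convex function of $f(v)$, minimized exactly when $f(v)$ is the barycenter (center of mass) of the points $\{f(u): u\sim v\}$. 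So harmonicity says: for every $v$, $f(v)$ is the barycenter of the images of the vertices in the link $Lk(v)$.

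Next I would bring in the CAT(0) comparison inequality that upgrades ``barycenter'' into a quantitative statement. Recall the standard fact that if $m$ is the barycenter of points $y_1,\dots,y_N$ in a CAT(0) space, then for every point $z$,
\[
\sum_{i=1}^N d(z,y_i)^2 \;\geq\; N\,d(z,m)^2 + \sum_{i=1}^N d(m,y_i)^2.
\]
Combined with the semi-parallelogram law, this lets one compare the ``vertical'' energy contribution around $v$ (distances from $f(v)$ to its neighbours) with the ``horizontal'' contributions along the edges of the link graph $Lk(v)$ (distances $d(f(u),f(w))$ for $u\sim w$ in $Lk(v)$). The point is that the functions $u\mapsto d(f(u),f(u'))^2$ (or the associated embedding into a Hilbert space via the standard trick of treating distance-squared data) satisfy, after subtracting the barycenter, the hypothesis $\sum_{u}(\cdot)=0$ of Proposition \ref{eigen}; applying that proposition (in its Hilbert-valued form, Remark \ref{eigen2}) at each vertex $v$ gives, after summing over $v\in\sigma(0)$, an inequality of the shape
\[
\Big(\sum_v \lambda(v)\Big)\text{-weighted vertex energy} \;\leq\; (\text{edge energy counted with multiplicity}),
\]
i.e. a Garland-type / Poincaré-on-links estimate. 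Because the smallest non-zero eigenvalues $\lambda(v)$ listed before Proposition \ref{eigen} are large enough relative to the combinatorics of $\Sigma$ (this is exactly where the hypotheses $q\geq 3$ for $\mathrm{Sp}_4$, and the $\mathrm{SL}_3$ case with no restriction, enter — and why $\mathrm{SL}_2$ is excluded), the two sides can only be consistent if all the edge-energies vanish, forcing $d(f(\widetilde u),f(\widetilde w))=0$ for every edge, so $f$ is constant on the connected complex $\Sigma(0)$.

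The main obstacle I anticipate is making the CAT(0) (rather than Riemannian) comparison argument fully rigorous and uniform: Wang works with a finite-dimensional Riemannian manifold of non-positive sectional curvature, where one has honest geodesics, Jacobi fields and a smooth second variation, whereas here $X\in\mathcal{X}$ may be an infinite-dimensional Hilbert manifold and the clean tool available is only the CAT(0) inequality and convexity of the distance function. So the technical heart is to replace every differential-geometric step of \cite{wang} by a synthetic CAT(0) estimate — existence and uniqueness of barycenters (which holds in complete CAT(0) spaces), the comparison inequality above, and the bookkeeping that turns the local eigenvalue bound into a global contradiction — while checking that finiteness of the telescopic/geometric dimension of $X$ is not needed for this step (it was only used for the existence of harmonic functions in Proposition \ref{existence}). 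A secondary point to watch is the non-special vertices in the $\mathrm{Sp}_4$ building, whose links are complete bipartite graphs with $\lambda(v)=q+1$; these must be handled together with the special vertices so that the weighted sum over all vertex-orbits still beats the edge count.
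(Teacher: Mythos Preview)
Your overall strategy is right --- this is indeed a Garland-type argument via the eigenvalue bound on links --- but you have misidentified the technical obstacle, and as a result your sketch detours around a difficulty that does not exist while leaving the actual mechanism vague. The spaces $X\in\mathcal{X}$ are by definition (Definition~\ref{familyX}) Riemannian manifolds or separable Hilbert manifolds; in either case one has honest tangent spaces $T_{f(v)}X$ (which are Hilbert spaces) and an exponential map, so there is no need to replace Wang's differential-geometric steps by synthetic CAT(0) estimates. The paper simply sets $Df|_v(u)=\exp_{f(v)}^{-1}f(u)\in T_{f(v)}X$; the first-variation condition becomes $\sum_{u}Df|_v(u)=0$ in that tangent Hilbert space (equivalent to your barycenter statement, but already in the form required by Remark~\ref{eigen2}); and non-positive curvature supplies the single comparison needed, $\|Df|_v(u)-Df|_v(w)\|\le d(f(u),f(w))$. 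Applying Remark~\ref{eigen2} to $Df|_v$ at each vertex and summing over a fundamental domain yields $(q+1)E(f)\ge 2\lambda E(f)$ for $\mathrm{SL}_3$, hence $E(f)=0$ since $\lambda=q+1-\sqrt{q}>(q+1)/2$ for every $q\ge 2$. For $\mathrm{Sp}_4$ one splits $E=E_1+E_2$ over the two edge-types and sums separately over special and non-special vertices, which is exactly where $q\ge 3$ enters.

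Your barycenter variance inequality and the vague ``distance-squared data embedded into a Hilbert space'' are not the tools the argument uses, and it is not clear they would produce the needed comparison without essentially reconstructing the log map anyway. The linearisation via $\exp^{-1}$ is the whole point: it converts the problem into a Hilbert-space-valued function on each link with mean zero, to which the linear Poincar\'e inequality of Proposition~\ref{eigen} applies directly. So rather than developing a synthetic CAT(0) substitute, you should simply observe that the Hilbert-manifold structure of $X$ already gives you everything Wang uses.
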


Together with Proposition \ref{existence}, this will prove Theorem \ref{main2}: given a continuous action of $\Gamma$ by isometries on $X$, if it does not fix any point in $\partial X$ then Proposition \ref{existence} applies and we have a harmonic function $f: \Sigma(0)\rightarrow X$ that must be constant by Proposition \ref{constant}, which means that the whole of $\Sigma(0)$ is sent to a single point $x\in X$ and therefore $x$ is fixed by $\Gamma$. \\

Therefore, we are left to prove Proposition \ref{constant}. In the case $G=\mathrm{SL}_3(\mathbb{F})$, the proof is basically the same as in \cite{wang}; we include it here in a form that is consistent with our situation and notation. In the case $G=\mathrm{Sp}_4(\mathbb{F})$, a small addition will be needed. \\

\begin{dfn}{(Differential)}
    The \textit{differential} of $f$ at $v\in\Sigma(0)$ is defined to be $$Df|_v: Lk(v)(0)\rightarrow T_{f(v)}X$$ such that $\mathrm{exp}(Df|_v(u))=f(u)$ for all $u\in Lk(v)(0)$, where $\mathrm{exp}$ is the exponential map on $T_{f(v)}X$. \\
\end{dfn}

Recall that a $\Gamma$-equivariant function $f: \Sigma(0)\rightarrow X$ is uniquely determined by its values at the vertices $v_1,\ldots,v_n$ in the fundamental domain $D$. In order for the map to be harmonic, the variation of the energy caused by any infinitesimal variation of $f(v_i)$, $1\leq i\leq n$, must be zero. This is expressed in the following lemma:

\begin{lem} \label{sum0}
    Let $f$ be harmonic. Then $$\sum_{u\in Lk(v_i)(0)}{Df|_{v_i}(u)}=0$$ for $1\leq i\leq n$.
\end{lem}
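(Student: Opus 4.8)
The plan is to compute the first variation of the energy functional $E$ viewed as a function on $X^n$, and to use the fact that $f$, being harmonic, is a critical point of $E$. Fix an index $i$ with $1 \leq i \leq n$, and consider varying the value $f(v_i)$ while keeping $f(v_j)$ fixed for $j \neq i$; of course, by $\Gamma$-equivariance, moving $f(v_i)$ forces a simultaneous isometric movement of $f$ at every $\Gamma$-translate of $v_i$, but by equivariance each such translated edge contributes the same amount to $E$, so it suffices to track the edges of $\sigma$ incident to the vertex $v_i$, i.e.\ the edges from $v_i$ to the vertices of $Lk(v_i)$.

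First I would set up the variation explicitly: for a tangent vector $\eta \in T_{f(v_i)}X$, let $c(t) = \exp_{f(v_i)}(t\eta)$ be the geodesic through $f(v_i)$ in direction $\eta$, and let $f_t$ be the $\Gamma$-equivariant function agreeing with $f$ except that $f_t(v_i) = c(t)$ (and correspondingly on the $\Gamma$-orbit of $v_i$). Then $E(f_t) = \sum_{u \in Lk(v_i)(0)} d(c(t), f(u))^2 + (\text{terms not involving } v_i)$, where each neighbour $u$ of $v_i$ is taken in $\Sigma(0)$ over the corresponding edge of $\sigma$. The standard first-variation formula for the squared distance function in a Riemannian manifold gives $\frac{d}{dt}\big|_{t=0} d(c(t), f(u))^2 = -2\langle \eta, \gamma_u'(0)\rangle$, where $\gamma_u$ is the unit-speed geodesic from $f(v_i)$ to $f(u)$ and $\gamma_u'(0) \in T_{f(v_i)}X$ points toward $f(u)$; note that $d(f(v_i), f(u)) \cdot \gamma_u'(0)$ is precisely $-Df|_{v_i}(u)$ up to sign, i.e.\ the vector $Df|_{v_i}(u) = \exp_{f(v_i)}^{-1}(f(u))$ whose exponential is $f(u)$. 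Summing over $u$, $\frac{d}{dt}\big|_{t=0} E(f_t) = 2\langle \eta, \sum_{u \in Lk(v_i)(0)} Df|_{v_i}(u)\rangle$ (with the appropriate sign convention absorbed into $Df$). Since $f$ minimizes $E$ and $t \mapsto E(f_t)$ is a smooth (indeed convex) function of $t$ with a minimum at $t = 0$, this derivative vanishes for every $\eta \in T_{f(v_i)}X$, and hence $\sum_{u \in Lk(v_i)(0)} Df|_{v_i}(u) = 0$.

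The main obstacle is making this variational argument fully rigorous in the infinite-dimensional Hilbert-manifold setting: one must check that $t \mapsto d(c(t), f(u))^2$ is differentiable with the expected first-variation formula, which requires that $f(u) \neq f(v_i)$ (so the geodesic direction is well-defined) — and if $f(u) = f(v_i)$ for some neighbour, the corresponding term is $d(c(t), f(v_i))^2 = t^2$, which is still differentiable at $t=0$ with derivative zero, consistent with $Df|_{v_i}(u) = 0$. One should also note that in a simply connected nonpositively curved symmetric space (finite- or infinite-dimensional) the exponential map at any point is a diffeomorphism, so $Df|_{v_i}$ is well-defined, and the squared-distance function is smooth and convex, which legitimizes differentiating under the finite sum. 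A cleaner alternative, avoiding any regularity fuss, is to observe directly that $g(t) := E(f_t)$ is convex in $t$ (finite sum of convex functions $d(c(t), f(u))^2$), attains its minimum at $t = 0$, and is right- and left-differentiable there; the one-sided derivatives at $t = 0$ are $\pm 2\langle \eta, \sum_u Df|_{v_i}(u)\rangle$ (using that the squared distance to a fixed point in a CAT(0) Riemannian manifold has gradient $-2\,\exp^{-1}$), and a convex function with a minimum has vanishing one-sided derivatives, giving the claim for all $\eta$ and hence the vanishing of the sum.
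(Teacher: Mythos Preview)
Your proposal is correct and follows essentially the same approach as the paper: both argue by varying $f(v_i)$ along a geodesic $t\mapsto\exp_{f(v_i)}(t\eta)$, compute the first variation of the energy using that the gradient of $x\mapsto d(x,f(u))^2$ at $f(v_i)$ is $-2\,Df|_{v_i}(u)$, and conclude from criticality that the sum vanishes for every test vector $\eta$. Your additional remarks on differentiability in the Hilbert-manifold setting and the convexity alternative are more careful than what the paper writes out, but do not change the argument.
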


\begin{proof}
    We consider the variation at $v_i$ given by $y\in T_{f(v_i)}X$. We denote by $E(t)$ the energy of the $\Gamma$-equivariant function that sends $v_i$ to $\mathrm{exp}(ty)$ and agrees with $f$ on $D\setminus\{v_i\}$. We have the following:
    \begin{equation*}
        \frac{d}{dt}E(t)=\frac{d}{dt}\left(\sum_{u\in Lk(v_i)(0)}{d(\mathrm{exp}(ty),f(u))^2}\right)=\sum_{u\in Lk(v_i)(0)}{\langle y,-2Df|_{v_i}(u)\rangle},
    \end{equation*}
    where the last equality holds because $f(v_i)$ and $f(u)$ are connected by a minimal geodesic in $X$ and the gradient of the function $x\mapsto d(x,u)^2$ at $x=v_i$ is $-2Df|_{v_i}(u)$. \\
    
    Since $f$ is harmonic it is a critical point of the energy functional, hence $$\sum_{u\in Lk(v_i)(0)}{\langle y,Df|_{v_i}(u)\rangle}=0.$$
    
    Since $y$ was arbitrary in $T_{f(v_i)}X$, the lemma follows. \\
\end{proof}

We can now proceed with the proof of Proposition \ref{constant}. First, we consider the case $G=\mathrm{SL}_3(\mathbb{F})$, where Proposition \ref{eigen} holds for $\lambda(v)=q+1-\sqrt{q}$ that does not depend on $v$. \\

Let $f$ be harmonic. Let $i\in\{1,\ldots,n\}$ and let $u,w\in Lk(v_i)(0)$. Since $X$ is non-positively curved, we have $$||Df|_{v_i}(u)-Df|_{v_i}(w)||\leq d(f(u),f(w)).$$

Therefore, $$\sum_{(u,w)\in Lk(v_i)(1)}{||Df|_{v_i}(u)-Df|_{v_i}(w)||^2}\leq\sum_{(u,w)\in Lk(v_i)(1)}{d(f(u),f(w))^2}.$$

Denoting the left-hand side by $||d(Df)_{v_i}||^2$ and summing over $i$, we obtain
\begin{equation} \label{eq1}
    \sum_{i=1}^{n}{||d(Df)_{v_i}||^2}\leq\sum_{i=1}^{n}{\sum_{(u,w)\in Lk(v_i)(1)}{d(f(u),f(w))^2}}.
\end{equation}

We claim that the right-hand side is equal to $(q+1)E(f)$. Using the notation of Definition \ref{energy}, we see that for every edge $(u,w)\in\sigma(1)$ we are summing $d(f(\widetilde{u}),f(\widetilde{w}))^2$ once for every vertex $v\in\sigma(0)$ such that $u,w,v$ are vertices of a simplex in $\sigma(2)$. The number of such vertices is always $q+1$, since it is the same as the degree of the vertex $w$ in the link graph $Lk(u)$. Therefore, \eqref{eq1} becomes:

\begin{equation} \label{eq2}
    \sum_{i=1}^{n}{||d(Df)_{v_i}||^2}\leq (q+1)E(f).
\end{equation}

Consider now, for $1\leq i\leq n$, the function $g_i=Df|_{v_i}$. It is a function $g_i: Lk(v_i)(0)\rightarrow T_{f(v_i)}X$, and we know by Lemma \ref{sum0} that $\sum_{u\in Lk(v_i)(0)}{g_i(u)}=0$. We can apply Remark \ref{eigen2} and get $||dg_i||^2\geq\lambda(v)||g_i||^2$, where $\lambda(v)=q+1-\sqrt{q}$ does not depend on $v$ and we may call it $\lambda$. This means that $$\sum_{(u,w)\in Lk(v_i)(1)}{||Df|_{v_i}(u)-Df|_{v_i}(w)||^2}\geq\lambda\sum_{u\in Lk(v_i)(0)}{||Df|_{v_i}(u)||^2}.$$

Since $||Df|_{v_i}(u)||^2=d(f(v_i),f(u))^2$, we see that if we sum over $i$, we get $2E(f)$ on the right-hand side, since for each edge $(v,u)\in\sigma(1)$ we are summing the quantity $d(f(\widetilde{v}),f(\widetilde{u}))^2$ twice. Therefore, we have:

\begin{equation} \label{eq3}
    \sum_{i=1}^{n}{||d(Df)_{v_i}||^2}\geq 2\lambda E(f).
\end{equation}

From \eqref{eq2} and \eqref{eq3} we get $(q+1)E(f)\geq 2\lambda E(f)$. We have $\lambda=q+1-\sqrt{q}>\frac{q+1}{2}$ for $q\geq 2$ and since $E(f)\geq 0$, this implies $E(f)=0$. By the definition of $E(f)$, this means that $f$ is constant on $\Sigma(0)$, proving Proposition \ref{constant} in the case $G=\mathrm{SL}_3(\mathbb{F})$. \\

We are left with the case $G=\mathrm{Sp}_4(\mathbb{F})$. Here $\lambda(v)$ is not the same for every $v$, but depends on whether the vertex $v$ is special or not. However, we can repeat the same proof and sum separately over special and non-special vertices. Let $\{v_1,\ldots,v_n\}=S\sqcup NS$ be the partition of $D=\{v_1,\ldots,v_n\}$ into special and non-special vertices, respectively. With Definition \ref{energy} in mind, we can set $E(f)=E_1(f)+E_2(f)$, where $E_1(f)$ is defined by taking the sum only over edges between two special vertices, and $E_2(f)$ is defined by taking the sum only over edges between a special and a non-special vertex (recall that there is no edge between two non-special vertices). \\

We can obtain inequality \eqref{eq1} separately for the sum over $S$ and for the sum over $NS$. Since every chamber in the building has one non-special and two special vertices, inequality \eqref{eq2} becomes:

\begin{equation*}
    \sum_{v\in S}{||d(Df)_v||^2}\leq (q+1)E_2(f),
\end{equation*}

\begin{equation*}
    \sum_{v\in NS}{||d(Df)_v||^2}\leq (q+1)E_1(f).
\end{equation*}

We can continue the proof as above, again summing separately over $S$ and over $NS$. Since $\lambda(v)=q+1-\sqrt{2q}$ for all $v\in S$ and $\lambda(v)=q+1$ for all $v\in NS$, inequality \eqref{eq3} becomes:

\begin{equation*}
    \sum_{v\in S}{||d(Df)_v||^2}\geq(q+1-\sqrt{2q})(2E_1(f)+E_2(f)),
\end{equation*}

\begin{equation*}
    \sum_{v\in NS}{||d(Df)_v||^2}\geq(q+1)E_2(f).
\end{equation*}

Putting this together with the previous inequalities, we get $E_1(f)\geq E_2(f)$ from the sum over $NS$, and then $(q+1)E_2(f)\geq 3(q+1-\sqrt{2q})E_2(f)$ from the sum over $S$. If $E_2(f)>0$, this is only possible if $\frac{\sqrt{2q}}{q+1}\geq\frac{2}{3}$, which implies $q\leq 2$. This means that if $q\geq 3$, we must have $E_2(f)=0$ and therefore $f$ constant, completing the proof of Proposition \ref{constant}. As explained above, Theorem \ref{main2} follows. \\

\begin{rmk}
    The last part of the proof of Proposition \ref{constant} is the reason why in Theorem \ref{Main} we need to assume that the residue field of $\mathbb{F}$ has at least three elements. In Theorem \ref{Main2} we do not need this assumption, since in the case $G=\mathrm{SL}_3(\mathbb{F})$ the inequality works for all $q\geq 2$.
\end{rmk}

\section{Obtaining a fixed point inside $X$}

In this section, we complete the proof of Theorem \ref{main}. We know from Theorem \ref{main2} that the action of the torsion-free cocompact lattice $\Gamma<G$ on any $X\in\mathcal{X}$ has to fix a point in $\overline{X}$. For convenience, we name this property as in \cite{wang}:

\begin{dfn}{(Property (F))}
    A group $\Gamma$ is said to have Property (F) if any continuous action by isometries of $\Gamma$ on any space $X\in\mathcal{X}$ has a fixed point in $\overline{X}$. \\
\end{dfn}

We also use the following definition:

\begin{dfn}{(Property (FH))}
    A locally compact topological group $\Gamma$ is said to have Property (FH) if any continuous affine isometric action of $\Gamma$ on a Hilbert space has a fixed point. \\
\end{dfn}

\begin{rmk}
    If $\Gamma$ is $\sigma$-compact, Property (FH) is equivalent to Kazhdan's Property (T). This is known as the Delorme-Guichardet Theorem (\cite{kazhdanT}, Theorem 2.12.4). \\
\end{rmk}

In this section, we show the following general result, which will allow us to conclude the proof of Theorem \ref{main}:

\begin{thm} \label{main3}
    Let $\Gamma$ be a locally compact group. Assume that $\Gamma$ has Property (F) and Property (FH). Then any continuous action by isometries of $\Gamma$ on a space $X\in\mathcal{X}$ has a fixed point in $X$. \\
\end{thm}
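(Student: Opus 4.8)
The plan is to use Property (F) to produce a fixed point in $\overline{X}$, to use Property (FH) to strip away the part of the action that escapes to infinity, and to induct on $r=\mathrm{rank}(X)$. If $r=0$ then $X$ is a point and there is nothing to prove, so assume $r\geq 1$ and that the theorem is known for every space in $\mathcal{X}$ of rank $<r$. By Property (F), the action fixes some $\xi\in\overline{X}=X\cup\partial X$; if $\xi\in X$ we are done, so assume $\xi\in\partial X$. Choosing a geodesic ray to $\xi$ and the associated Busemann function $b_\xi$, the fact that $\Gamma$ fixes $\xi$ yields a continuous homomorphism (the Busemann character) $\beta\colon\Gamma\to\mathbb{R}$ with $b_\xi(\gamma^{-1}x)=b_\xi(x)+\beta(\gamma)$ for all $x\in X$. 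If $\beta\neq 0$, then $t\mapsto t+\beta(\gamma)$ is a continuous affine isometric action of $\Gamma$ on the Hilbert space $\mathbb{R}$ with no fixed point, contradicting Property (FH); hence $\beta=0$ and $\Gamma$ preserves every horosphere based at $\xi$.

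The next step invokes the structure of a symmetric space at a point at infinity. The stabiliser $P_\xi=\mathrm{Isom}(X)_\xi$ should admit a Langlands/Iwasawa-type decomposition $P_\xi\cong N_\xi\rtimes(A_\xi\times M_\xi)$, where $A_\xi\cong\mathbb{R}^d$ with $d\geq 1$ records Busemann characters, $N_\xi$ is a nilpotent group of finite nilpotency class acting simply transitively on each horosphere based at $\xi$, and $M_\xi$ acts on $X$ with a totally geodesic orbit isometric to a symmetric space $X_\xi\in\mathcal{X}$ of rank $r-d<r$; moreover some point of that orbit is fixed by the "maximal compact" subgroup of $M_\xi$. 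Since a group with Property (FH) admits no nonzero continuous homomorphism to the Hilbert space $\mathbb{R}^d$, the $A_\xi$-component of $\Gamma\subset P_\xi$ is trivial, so $\Gamma\subset N_\xi\rtimes M_\xi$ and $\Gamma$ acts on $X_\xi$ through $\Gamma\to M_\xi$. As $\Gamma$ has Properties (F) and (FH) and $\mathrm{rank}(X_\xi)<r$, the induction hypothesis gives a point $\bar p\in X_\xi$ fixed by the image of $\Gamma$; equivalently $\Gamma\subset N_\xi\rtimes K'$, where $K'=\mathrm{Stab}_{M_\xi}(\bar p)$ fixes a point $p_0\in X$ (the point of the orbit corresponding to $\bar p$).

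It remains to absorb the nilpotent radical. Writing $\gamma=n(\gamma)k(\gamma)$ with $n(\gamma)\in N_\xi$ and $k(\gamma)\in K'$, we get $\gamma\cdot p_0=n(\gamma)\cdot p_0$, and $\gamma\mapsto n(\gamma)$ is a continuous $1$-cocycle for the conjugation action of $\Gamma$ (via $k$) on $N_\xi$. Since $K'$ fixes $p_0$ it acts orthogonally on $T_{p_0}X$, hence orthogonally on the Lie algebra of $N_\xi$ (identified, via the orbit map, with the tangent space to the horosphere through $p_0$) and on each quotient $N_\xi^{(i)}/N_\xi^{(i+1)}$ of its lower central series, which is a Hilbert space. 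Pushing the cocycle to such a quotient yields a cocycle for an orthogonal representation, which by Property (FH) is a coboundary; translating $p_0$ inside its horosphere by a suitable element of $N_\xi^{(i)}$ then makes the new cocycle take values in $N_\xi^{(i+1)}$. After finitely many steps (the nilpotency class is finite because $X$ has finite rank) the cocycle vanishes, so $\Gamma$ fixes a point of $X$; this closes the induction.

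The main obstacle is the structural input of the second paragraph: establishing, for the spaces of $\mathcal{X}$, the decomposition of $P_\xi$ — the simply transitive horospherical group $N_\xi$ (nilpotent of finite class, with Hilbert-space layers on which $K'$ acts orthogonally), the vector part $A_\xi$, the reductive part $M_\xi$ with its totally geodesic orbit isometric to some $X_\xi\in\mathcal{X}$ of strictly smaller rank, together with the continuity of the maps needed to apply Property (FH). In finite dimensions this is classical; here it should follow from the structure theory of these symmetric spaces developed by Duchesne, but verifying it carefully is where the real work lies. A reassuring special case is $X=\mathbf{H}^\infty$: there $N_\xi$ is abelian and $M_\xi$ is compact, so once the Busemann character is killed, $\Gamma$ acts by affine isometries on a horosphere — a Hilbert space — and Property (FH) produces the fixed point immediately.
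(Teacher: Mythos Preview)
Your outline is the same strategy the paper uses: induct on the rank, fix a boundary point $\xi$, decompose the parabolic $P_\xi$ as a soluble (nilpotent) radical semidirect a Levi factor, apply the inductive hypothesis to the Levi part to land in $N_\xi\rtimes K'$ with $K'$ fixing a point, and then peel off $N_\xi$ layer by layer using Property~(FH). The paper packages that last step as a separate lemma (its Lemma on soluble normal subgroups, using the derived series rather than the lower central series; for nilpotent $N_\xi$ this is cosmetic).

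There are two places where the paper does real work that your sketch defers. First, before looking at parabolics at all, the paper invokes the Foertsch--Lytchak de~Rham decomposition to reduce to a single irreducible factor (a finite-dimensional irreducible or an $X_p(\mathbb{K})$); this avoids having to describe $P_\xi$ for a general product $X\in\mathcal{X}$ and lets Property~(FH) dispose of the flat factor directly. Second, and this is exactly the obstacle you flag, the paper does \emph{not} appeal to an abstract Langlands decomposition in infinite dimension: for $X_p(\mathbb{K})$ it writes $P_\xi$ in explicit block-matrix form, reads off by hand that the unipotent radical is $2$-step soluble with Hilbert-space layers, that the Levi factor is $\mathrm{P}(S\times \mathrm{O}_\mathbb{K}(p-q,\infty))$ acting on a lower-rank space in~$\mathcal{X}$, and that the compact stabiliser acts orthogonally on each layer. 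So your diagnosis that ``verifying it carefully is where the real work lies'' is accurate, and the paper's answer is: do it by an explicit matrix computation rather than by general structure theory.
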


\begin{rmk} \label{fiffh}
    If $\Gamma$ has Property (F) or Property (FH), every closed finite index subgroup $\Gamma'<\Gamma$ has the corresponding property.
\end{rmk}

\begin{proof}[Proof of Remark \ref{fiffh}]
    For Property (FH), this is (\cite{kazhdanT}, Proposition 2.5.7). For Property (F), the proof is similar. If $\Gamma'$ has a continuous action $\alpha$ by isometries on a space $X\in\mathcal{X}$, one can define the induced action $\mathrm{Ind}_{\Gamma'}^{\Gamma}{\alpha}$ on the space $\Xi$ of functions $\xi: G\rightarrow X$ such that $\xi(gh)=\alpha(h^{-1})\xi(g)$, by $$\left(\mathrm{Ind}_{\Gamma'}^{\Gamma}{\alpha}(g)\xi\right)(x)=\xi(g^{-1}x),\ g,x\in\Gamma,\ \xi\in\Xi.$$ This construction is a special case of the one that appears in (\cite{splitting}, 5.4). \\
    
    Let $n=[\Gamma:\Gamma']$. We can identify the space $\Xi$ with $X^n$ by choosing coset representatives, and it is easy to check that the action $\mathrm{Ind}_{\Gamma'}^{\Gamma}{\alpha}$ is continuous and isometric for the $L^2$ product metric. We have $\Xi\in\mathcal{X}$, therefore the action fixes a point in $\xi\in\Xi$ or $\eta\in\partial\Xi$. In the first case, it is immediately checked that $\xi(e)\in X$ is fixed by $H$. In the second case, we know that $\partial\Xi$ is the spherical join of $n$ copies of $\partial X$ (\cite{bh}, II.9), that is, the space of $n$-tuples $(\zeta_1,\ldots,\zeta_n)\in(\partial X)^n$ together with weights $(\lambda_1,\ldots,\lambda_n)$ satisfying $\sum_{i=1}^{n}{\lambda_i^2}=1$. It is immediately checked that $\eta$ must correspond to a $n$-tuple of the form $(\zeta,\ldots,\zeta)$ with equal weights, with $\zeta\in\partial X$ fixed by $H$. \\
\end{proof}

\subsection{More about the family $\mathcal{X}$} \label{morefamily}

As a first step towards the proof of Theorem \ref{main3}, we determine more explicitly the family of possible spaces $X$. On one hand, it contains all simply connected finite-dimensional symmetric spaces of non-positive curvature. On the other hand, it is shown in \cite{duc2015} (Corollary 1.10) that any space in $\mathcal{X}$, after possibly removing the Euclidean de Rham factor, is isometric to a finite product of irreducible (finite-dimensional) symmetric spaces of non-compact type and spaces of the form $\mathrm{O}(p,\infty)/\mathrm{O}(p)\times\mathrm{O}(\infty)$, $\mathrm{U}(p,\infty)/\mathrm{U}(p)\times\mathrm{U}(\infty)$, $\mathrm{Sp}(p,\infty)/\mathrm{Sp}(p)\times\mathrm{Sp}(\infty)$ with $p\geq 1$, up to homothety. Such spaces were studied in \cite{duc2013}. We use the notation $X_p(\mathbb{K})$ for these spaces, where $\mathbb{K}$ is the field of real, complex, or quaternionic numbers. \\

A general $X\in\mathcal{X}$ is then a product

\setcounter{equation}{0}
\begin{equation} \label{dec}
    X\cong \mathbb{R}^r\times\prod_{i=1}^{h}{M_i}\times\prod_{i=1}^{k}{N_i}
\end{equation}

where each $M_i$ is a simply connected finite-dimensional symmetric space with non-positive curvature and each $N_i$ is homothetic to some $X_p(\mathbb{K})$. \\

We can reduce our problem to each of these single factors. According to a result by Foertsch and Lytchak \cite{foly}, every geodesic space $X$ of finite affine rank has a unique decomposition $$X=Y_0\times\prod_{i=1}^{n}{Y_i}$$ where $Y_0$ is a Euclidean space (possibly a point) and each $Y_i$, $1\leq i\leq n$, is an irreducible metric space not isometric to the real line or to a point. Moreover, each isometry of $X$ is a product of isometries of the single $Y_i$, up to possibly permuting factors which are isometric. This result is a generalization of the classical De Rham decomposition for finite-dimensional Riemannian manifolds \cite{derham}. \\

We will prove in the Appendix that the spaces $X_p(\mathbb{K})$ are irreducible metric spaces. Since they are infinite-dimensional, they cannot appear as factors of the finite-dimensional symmetric spaces ($\mathbb{R}^r$ or $M_i$) in the decomposition \eqref{dec} of $X$. Therefore, the result of Foertsch and Lytchak implies that any isometry of $X$ is a product of an isometry of $\mathbb{R}^r\times\prod_{i=0}^{h}{M_i}$ and an isometry of each factor $N_i$, $1\leq i\leq k$, up to possibly permuting isometric factors. The classical result of De Rham then allows us to conclude that any isometry is a product of isometries of each individual factor in the decomposition, up to possibly permuting isometric factors. \\

There is also an alternative way to see that the factors that appear in the decomposition \eqref{dec} of $X$ coincide with the factors of Foertsch-Lytchak and therefore any isometry of $X$ is a product of isometries of the individual factors up to possibly permuting isometric ones. Indeed, since $X$ is a geodesic space and all its geodesics extend indefinitely in both directions, the same must be true for the factors $(Y_i)_{0\leq i\leq n}$ in the Foertsch-Lytchak decomposition (this follows from the fact that geodesics in a product of metric spaces project to geodesics in factors, as seen in \cite{bh}, I.5.3). Then, as a consequence of Proposition 2.2 in \cite{duc2023}, these factors must be totally geodesic submanifolds of $X$ and therefore coincide (up to the order) with the factors in the decomposition \eqref{dec}. This was suggested by Bruno Duchesne. \\

Going back to the statement we wish to prove, Theorem \ref{main3}, we see that $\Gamma$ must have a finite index subgroup $\Gamma'$ that acts on $X$ without permuting the factors. If we prove that the action of $\Gamma'$ has a fixed point in $X$, then the action of $\Gamma$ will have a finite orbit and hence a fixed point (\cite{bh}, II.2.8). Therefore, from now on, we may just assume that the action of $\Gamma$ does not permute the factors. \\

Now, $\Gamma$ acts on each factor and the action on $X$ is just the product of these actions. The action on the Euclidean factor has a fixed point due to Property (FH). Each $M_i$ and each $N_i$ is a symmetric space that belongs to the class $\mathcal{X}$, hence the action of $\Gamma$ has a fixed point in the space or in its boundary (Property (F)). If we prove Theorem \ref{main3} in the case where $X$ is a single factor, we will find that in the general case the action on each factor has a fixed point, and therefore the global action has a fixed point, proving the theorem. Hence from now on we will assume that $X$ is either an irreducible finite-dimensional symmetric space of non-compact type, or a space $X_p(\mathbb{K})$ for some $p\geq 1$ and $\mathbb{K}\in\{\mathbb{R},\mathbb{C},\mathbb{H}\}$. \\

\subsection{Property (FH) and soluble normal subgroups}

The main tool to prove Theorem \ref{main3} will be a form of Levi decomposition for parabolic subgroups of $H=\mathrm{Isom}(X)$, where a parabolic subgroup will be expressed as a semidirect product of a soluble normal subgroup and a Levi factor of lower rank. In this subsection, we prove a lemma that will allow us to restrict the action to the Levi factor, and thus to reduce the rank of the target space.

\begin{lem} \label{soluble}
    Let $\Gamma$ be a locally compact group with Property (FH), and let $H=N\rtimes_\phi Q$ be a semidirect product of topological groups. Assume that $N$ is soluble and that every Abelian quotient that arises from its derived series is isomorphic (as a topological group) to the additive group of a Hilbert space, where the action of $Q$ (induced from $\phi$) preserves a scalar product. Then any continuous homomorphism $f:\Gamma\rightarrow H$ has image contained in a conjugate of $Q$.
\end{lem}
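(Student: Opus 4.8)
The plan is to induct on the length of the derived series of $N$. First I would set up the base case: when $N$ is itself Abelian, it is by hypothesis (the additive group of) a Hilbert space $V$ on which $Q$ acts by linear isometries. Given a continuous homomorphism $f\colon\Gamma\to H=V\rtimes_\phi Q$, write $f(\gamma)=(b(\gamma),p(\gamma))$; then $p\colon\Gamma\to Q$ is a continuous homomorphism and $b\colon\Gamma\to V$ satisfies the cocycle identity $b(\gamma\delta)=b(\gamma)+\phi(p(\gamma))b(\delta)$. Thus $\gamma\mapsto(\gamma\cdot v):=\phi(p(\gamma))v+b(\gamma)$ is a continuous affine isometric action of $\Gamma$ on $V$. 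By Property (FH) it has a fixed point $v_0\in V$, and the translation by $v_0$ inside $V\rtimes_\phi Q$ conjugates $f$ into $Q$: indeed $(-v_0,e)f(\gamma)(v_0,e)=(\,b(\gamma)+\phi(p(\gamma))v_0-v_0,\ p(\gamma))=(0,p(\gamma))\in Q$. So the image of $f$ lies in a conjugate of $Q$ by an element of $N$.

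For the inductive step, suppose $N$ has derived series $N=N^{(0)}\rhd N^{(1)}\rhd\cdots\rhd N^{(\ell)}=\{e\}$ of length $\ell\geq 2$, and that the statement holds for all such $H$ with soluble kernel of shorter derived series. The abelianization $N/[N,N]=N/N^{(1)}$ is, by hypothesis, a Hilbert space $V$ with a $Q$-invariant scalar product, and $Q$ also acts on it via $\phi$, so there is a continuous surjection $H=N\rtimes_\phi Q\twoheadrightarrow V\rtimes Q$ with kernel $N^{(1)}$. Composing with $f$ and applying the base-case argument gives an element of $V$ — equivalently a coset $nN^{(1)}$ with $n\in N$ — conjugation by which sends the image of $\overline{f}\colon\Gamma\to V\rtimes Q$ into $Q$; lifting, after conjugating $f$ by $n$ we may assume $f(\Gamma)\subseteq N^{(1)}\rtimes_\phi Q$. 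Now $N^{(1)}$ is soluble with derived series of length $\ell-1$, and crucially its derived series is an initial segment of that of $N$ (the derived series of a group is characteristic, and $(N^{(1)})^{(j)}=N^{(j+1)}$), so the Hilbert-space-with-invariant-scalar-product hypothesis is inherited by the abelian quotients arising from the derived series of $N^{(1)}$, with $Q$-action induced from $\phi$. Applying the inductive hypothesis to $N^{(1)}\rtimes_\phi Q$ places $f(\Gamma)$ in a conjugate of $Q$ by an element of $N^{(1)}\subseteq N$; composing the two conjugations (both by elements of $N$) finishes the induction.

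The main point requiring care — the step I would expect to be the genuine obstacle rather than routine bookkeeping — is the compatibility of the induced actions across the two descriptions of the extension. One must check that the $Q$-action on $V=N/N^{(1)}$ "preserving a scalar product" that appears in the hypothesis is exactly the action $\overline\phi$ induced by $\phi$ on the quotient (so that the semidirect product $V\rtimes Q$ really is the quotient $H/N^{(1)}$ as a topological group), and, in the inductive step, that the induced action of $Q$ on the abelian quotients of $N^{(1)}$'s derived series still preserves a scalar product — which it does, because $N^{(j+1)}/N^{(j+2)}$ is the same subquotient whether computed inside $N$ or inside $N^{(1)}$, and the hypothesis was stated for all abelian quotients arising from the derived series of $N$. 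One should also verify that conjugation by an element $n\in N$ is a topological-group automorphism of $H$ carrying $N^{(1)}\rtimes_\phi Q$ onto itself when $n\in N^{(1)}$ (immediate, since $N^{(1)}$ is normal in $N$ and hence in $H$), so that the nested conjugations compose to a single conjugation by an element of $N$, giving the asserted conclusion that $f(\Gamma)$ lies in $nQn^{-1}$ for some $n\in N$. All the analytic content is concentrated in the single appeal to Property (FH) in the base case; everything else is algebra plus continuity of the maps involved.
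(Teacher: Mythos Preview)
Your proposal is correct and follows essentially the same route as the paper: induction on the derived length of $N$, using Property~(FH) to produce a fixed point for the affine isometric action of $\Gamma$ on the abelianization $N/[N,N]$, conjugating by a lift of that fixed point to land in $[N,N]\rtimes Q$, and then invoking the inductive hypothesis. The only cosmetic difference is that the paper takes the trivial case $N=\{e\}$ as its base and places the entire FH argument in the inductive step, whereas you isolate the abelian case $\ell=1$ as the base and then reuse it; the content is identical.
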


\begin{proof}
    We prove the lemma by induction on the solubility length of $N$. The base case will be the trivial case with solubility length zero, where $N$ is the trivial group, and $H=Q$. For the inductive step, we assume that $N$ has solubility length $n\geq 1$ and that the lemma is true in all cases with solubility length $\leq n-1$. \\

    Let $N':=[N,N]$ be the commutator subgroup of $N$, which has solubility length $n-1$, let $A=N/N'$ be the abelianization of $N$, and let $\mathrm{ab}: N\rightarrow A$ be the natural projection (with kernel $N'$). We recall that $Q$ acts on $N$ by the map $\phi: Q\rightarrow\mathrm{Aut}(N)$ that appears in the semidirect product and that associates to every $q\in Q$ the conjugation by $q$. Since the commutator subgroup $N'$ is characteristic, it is preserved by the action of $Q$, which means that the action descends to the quotient $A$. We call $\phi_{\mathrm{ab}}$ this induced action. By hypothesis, $A$ is isomorphic (as a topological group) to the additive group of a Hilbert space, and the action $\phi_{\mathrm{ab}}$ of $Q$ preserves a scalar product in this Hilbert space. \\

    We can now define an action of $\Gamma$ on $A$ as follows: for every $g\in \Gamma$ there exist unique $n(g)\in N, q(g)\in Q$ such that $f(g)=n(g)q(g)$; then we set $$F(g)(a)=\mathrm{ab}(n(g))\phi_{\mathrm{ab}}(q(g))(a)\ \ \ \forall a\in A.$$
    
    The action of $Q$ is by automorphisms, so by linear operators, while the action of $N$ is by translations. Therefore, the action of $\Gamma$ is by affine transformations, and as the action of $Q$ preserves a scalar product, the whole action of $\Gamma$ is by isometries with respect to the metric induced by this scalar product on $A$. \\
    
    Using Property (FH), we find that the action of $\Gamma$ on $A$ fixes a point $a_0$, which means that if we conjugate $f$ by an element $n_0\in N$ with $\mathrm{ab}(n_0)=a_0$ we get an action that fixes the neutral element of $A$, which means that a conjugate of the homomorphism $f$ has image contained in $N'\rtimes Q$. Now, since $N'$ has solubility length $n-1$, by inductive hypothesis this conjugate has image contained in a conjugate of $Q$, so in the end we find that $f$ has image contained in a conjugate of $Q$. \\
    
\end{proof}

\subsection{The induction in the finite-dimensional case}

The proof of Theorem \ref{main3} will be by induction on the rank of the target space $X$. The base case (rank 0) is trivial: a simply connected symmetric space of rank 0 is reduced to a point because otherwise it would contain a geodesic that is an embedded copy of $\mathbb{R}$. Therefore, it suffices to prove our theorem when $X$ has rank $p\geq 1$ assuming that it is true for spaces of rank $\leq p-1$. \\

As the group $\Gamma$ has Property (F), we know that every action of $\Gamma$ on $X$ has to fix a point in $X$ or in $\partial X$. The first case is the thesis of our theorem, so from now on we will assume that we are in the second case, i.~e.~$\Gamma$ fixes $x\in\partial X$. This means that there is a continuous homomorphism $f: \Gamma\rightarrow H_x$, where $H_x<H=\mathrm{Isom}(X)$ is the parabolic subgroup defined by $H_x=\{h\in H: hx=x\}$. \\

In this subsection, we focus on the case where $X$ is finite-dimensional. We will use the classical theory of Lie groups and symmetric spaces of non-compact type, for which a good reference is the book by Eberlein \cite{eberlein}. We use results from (2.17) of that book, which we report here using our notation. \\

At the beginning, Eberlein defines a parabolic subgroup $H_x$ of $H$ as the stabilizer of a point at infinity $x$, then defines a homomorphism $T_x: H_x\rightarrow H$ by fixing a point $p\in X$ and a tangent vector $V$ at $p$ pointing to $x$, and setting $$T_x(h)=\lim_{t\rightarrow +\infty}{e^{-tV}he^{tV}}.$$ Then he proves (2.17.5) that there is a decomposition $$H_x=N_x Z_x$$ where $N_x$ is the kernel of the homomorphism $T_x$ and $Z_x=\{h\in H: he^{tV}=e^{tV}h\ \forall t\in\mathbb{R}\}\subseteq H_x$. The subgroup $N_x$ is connected and normal in $H_x$, and the decomposition of any $h\in H_x$ as the product of an element in $N_x$ and an element in $Z_x$ is unique. It follows that $H_x$ is a semidirect product $$H_x=N_x\rtimes Z_x.$$ Eberlein then defines $\mathfrak{n}_x$ as the Lie algebra of $N_x$ and shows that it is nilpotent (2.17.14), which means that the group $N_x$ is nilpotent in the algebraic sense (\cite{hilneeb}, Theorem 11.2.5). Moreover, as shown in the proof of the Iwasawa decomposition (\cite{hilneeb}, Theorem 13.3.8), $N_x$ is simply connected. \\

Now, if we have the homomorphism $f: \Gamma\rightarrow H_x$, we can compose it with the projection $\pi: H_x\rightarrow Z_x$ and get a homomorphism $\overline{f}: \Gamma\rightarrow Z_x$. In the proof of (\cite{eberlein}, 2.17.5) it is shown that if $\gamma_{px}$ is a geodesic that passes through $p$ and points to $x$, the set $F(\gamma_{px})$ defined as the union of all geodesics parallel to $\gamma_{px}$ is preserved by the action of $Z_x$. Therefore, we have an action of $\Gamma$ on $F(\gamma_{px})$. It is shown in \cite{eberlein} (2.11.4) that $F(\gamma_{px})$ is a complete totally geodesic submanifold of $X$ and $$F(\gamma_{px})=\mathbb{R}^r\times F_S(\gamma_{px})$$ where $r\geq 1$ and $F_S(\gamma_{px})$ is a symmetric space of non-compact type. \\

As $\Gamma$ acts on $F(\gamma_{px})$, the result of De Rham \cite{derham} implies that the action splits as the product of an action on $\mathbb{R}^r$ and an action on $F_S(\gamma_{px})$. Due to Property (FH), the action on $\mathbb{R}^r$ fixes a point; hence there exists an embedded copy of $F_S(\gamma_{px})$ that is preserved by $\Gamma$ and on which $\Gamma$ acts. \\

As $\Gamma$ has Property (F) it must fix a point in $F_S(\gamma_{px})$ or in its boundary. Since $F(\gamma_{px})$ is embedded in $X$ and since $r\geq 1$, the rank of $F_S(\gamma_{px})$ must be strictly lower than the rank of $X$, so we can use our inductive hypothesis and find that $\Gamma$ has to fix a point inside $F_S(\gamma_{px})$. \\

This means that the image of the homomorphism $\overline{f}$ is contained in a compact subgroup $Q$ of $Z_x$, and therefore the image of $f$ is contained in $N_x\rtimes Q$. Now we would like to apply Lemma \ref{soluble}, which would allow us to conclude that the image of $f$ is actually contained in a conjugate of $Q$, and therefore that the action of $\Gamma$ fixes a point in $X$, which is the thesis of Theorem \ref{main3}. \\

To verify the hypotheses of Lemma \ref{soluble}, we need to see that each Abelian quotient that appears in the derived series of the soluble group $N_x$ is isomorphic to the additive group of a Hilbert space where the action of $Q$ preserves a scalar product. We know from \cite{hilneeb} (Proposition 11.2.4) that the groups in the derived series of a connected Lie group $\mathcal{G}$ are integral subgroups (that is, they are generated by a subalgebra of the Lie algebra of $\mathcal{G}$), and that integral subgroups $\mathcal{H}$ of a simply connected soluble Lie group $\mathcal{G}$ are closed and simply connected, with $\mathcal{G}/\mathcal{H}$ isomorphic to $\mathbb{R}^{\dim{\mathcal{G}/\mathcal{H}}}$ (\cite{hilneeb}, Proposition 11.2.15). We can apply this to the Lie group $\mathcal{G}=N_x$, finding that the groups in the derived series are closed and simply connected, and each quotient is isomorphic to $\mathbb{R}^n$ for some $n$. We also know that the groups in the derived series are characteristic subgroups of $N_x$ (meaning that they are preserved by any automorphism of $N_x$), and hence the action of $Q$ given by the semidirect product $N_x\rtimes Q$ descends to an action on each quotient. Furthermore, since $Q$ is compact, each quotient has a scalar product that is preserved by this action of $Q$: it suffices to start from any scalar product and integrate it over $Q$ to obtain a $Q$-invariant one. This completes the proof. \\

\subsection{The infinite-dimensional case}

We are left with showing the inductive step for the infinite-dimensional case, where we have $X=X_p(\mathbb{K})$ for some $p\geq 1$ and $\mathbb{K}\in\{\mathbb{R},\mathbb{C},\mathbb{H}\}$. Since $\Gamma$ acts continuously by isometries on $X$, it has a continuous homomorphism to the group $H=\mathrm{PO}_\mathbb{K}(p,\infty)$, where $\mathrm{PO}_\mathbb{R}(p,\infty)=\mathrm{PO}(p,\infty)$, $\mathrm{PO}_\mathbb{C}(p,\infty)=\mathrm{PU}(p,\infty)$, and $\mathrm{PO}_\mathbb{H}(p,\infty)=\mathrm{PSp}(p,\infty)$. In fact, it is shown in \cite{duc2023} (Theorem 3.3) that $\mathrm{Isom}(X_p(\mathbb{K}))=\mathrm{PO}_\mathbb{K}(p,\infty)$, except in the case $\mathbb{K}=\mathbb{C}$ where $\mathrm{PU}(p,\infty)$ is a subgroup of index two in $\mathrm{Isom}(X_p(\mathbb{C}))$. In that case, we have a subgroup $\Gamma'<\Gamma$ of index $\leq 2$ with a homomorphism to $\mathrm{PU}(p,\infty)$, and if we show that $\Gamma'$ fixes a point in $X$, then this implies that $\Gamma$ has a finite orbit and hence fixes a point. \\

Therefore, we consider a continuous homomorphism $f: \Gamma\rightarrow H=\mathrm{PO}_\mathbb{K}(p,\infty)$, that is, a continuous action of $\Gamma$ by projective transformations on a $\mathbb{K}$-vector space $V$ of countable dimension that preserves a nondegenerate sesquilinear form of index $p$. \\

Since $\Gamma$ has Property (F), we can assume that the action on $X=X_p(\mathbb{K})$ associated with $f$ fixes a point $x\in\partial X$, therefore, the image of $f$ is contained in $H_x=\{h\in H: hx=x\}$. \\

In the course of the proof, we will consider matrices associated to elements of $\mathrm{PO}_\mathbb{K}(p,\infty)$ or other projective groups. This means that we are writing the explicit matrix form of a representative of the element in the corresponding matrix group (in this case, $\mathrm{O}_\mathbb{K}(p,\infty)$). Moreover, in the case $\mathbb{K}=\mathbb{H}$, we use the convention for which the multiplication by a scalar in the vector space is on the right, so that the action is on the left.  \\

We can choose a basis $(e_1,e_2,\ldots)$ of $V$ such that the sesquilinear form is expressed by the infinite matrix $$\large \left(\begin{array}{c|c|c}
    0 & Id_p & 0 \\
    \hline
    Id_p & 0 & 0 \\
    \hline
    0 & 0 & Id_\infty
\end{array}\right)$$ where $Id_p$ is the identity $p\times p$ matrix and $Id_\infty$ is the identity infinite matrix. \\

Each point at infinity of $X_p(\mathbb{K})$ is associated with a flag of isotropic subspaces of the vector space $V$: this was shown in \cite{duc2013} (Proposition 6.1) in the real case, but is true in all cases, as observed in \cite{duc2023} (as a consequence of Remark 3.1 in \cite{duc2023}). \\

We consider the flag associated with the point at infinity $x$ (which is fixed by the action of $\Gamma$). Let $q\leq p$ be the dimension of the maximal subspace in the flag. We may choose the basis in such a way that this maximal subspace is exactly $\mathrm{Span}\{e_1,\ldots,e_q\}$. Then we can change the order of the elements $(e_{q+1},e_{q+2},\ldots)$ of the basis in such a way that the sesquilinear form is now expressed by the matrix $$\large\left(\begin{array}{c|c|c}
	0 & Id_q & 0 \\
	\hline
	Id_q & 0 & 0 \\
	\hline
	0 & 0 & \Large J
\end{array}\right),\ \ \ J\normalsize=\left(\begin{array}{c|c}
	-Id_{p-q} & 0 \\
	\hline
	    0 & Id_\infty
\end{array}\right).$$ \\

Now we consider the matrix associated with a transformation in $H_x$ with respect to this new basis and find some restrictions on its structure. We see it as a block matrix similar to the one expressing the sesquilinear form, i.~e.~with two blocks of dimension $q$ and one block of infinite dimension, both horizontally and vertically. This means that we are splitting the vector space $V$ into a direct sum $V_1\oplus V_2\oplus V_3$ where $V_1$ and $V_2$ have dimension $q$ and $V_3$ has infinite dimension, and each block of the matrix corresponds to a continuous linear map from $V_i$ to $V_j$ for some $i,j\in\{1,2,3\}$. \\

First of all, we note that to preserve $V_1$ the two bottom blocks in the left column must vanish. For the upper left block, we generally have a block-upper triangular structure depending on the flag associated with $x$, but we are not going to need that structure in our proof, so we may just assume that it belongs to some subgroup $S<\mathrm{GL}_q(\mathbb{K})$ (possibly the whole of $\mathrm{GL}_q(\mathbb{K})$). We get the form: $$\large \left(\begin{array}{c|c|c}
    M & Y & Z \\
    \hline
    0 & L & W \\
    \hline
    0 & B & R
\end{array}\right).$$

In order to preserve the sesquilinear form, we must have $$\left(\begin{array}{c|c|c}
    M^T & 0 & 0 \\
    \hline
    Y^T & L^T & B^T \\
    \hline
    Z^T & W^T & R^T
\end{array}\right)\left(\begin{array}{c|c|c}
    0 & Id_q & 0 \\
    \hline
    Id_q & 0 & 0 \\
    \hline
    0 & 0 & J
\end{array}\right)\left(\begin{array}{c|c|c}
    M & Y & Z \\
    \hline
    0 & L & W \\
    \hline
    0 & B & R
\end{array}\right)=\left(\begin{array}{c|c|c}
    0 & Id_q & 0 \\
    \hline
    Id_q & 0 & 0 \\
    \hline
    0 & 0 & J
\end{array}\right)$$ which is equivalent to $$\left(\begin{array}{c|c|c}
    0 & M^T L & M^T W \\
    \hline
    L^T M & L^T Y + Y^T L + B^T J B & L^T Z + Y^T W + B^T J R \\
    \hline
    W^T M & W^T Y + Z^T L + R^T J B & W^T Z + Z^T W + R^T J R
\end{array}\right)=\left(\begin{array}{c|c|c}
    0 & Id_q & 0 \\
    \hline
    Id_q & 0 & 0 \\
    \hline
    0 & 0 & J
\end{array}\right).$$

From the first row we immediately get $L=(M^T)^{-1}$ and $W=0$, so we might simplify to $$\left(\begin{array}{c|c|c}
    0 & Id_q & 0 \\
    \hline
    Id_q & M^{-1} Y + (M^{-1} Y)^T + B^T J B & M^{-1} Z + B^T J R \\
    \hline
    0 & (M^{-1} Z)^T + R^T J B & R^T J R
\end{array}\right)=\left(\begin{array}{c|c|c}
    0 & Id_q & 0 \\
    \hline
    Id_q & 0 & 0 \\
    \hline
    0 & 0 & J
\end{array}\right).$$

From $R^T J R = J$ we see that $R\in \mathrm{O}_\mathbb{K}(p-q,\infty)$. From $M^{-1} Z + B^T J R = 0$ we get $Z = - M B^T J R$. Therefore, the matrix of our transformation has the form $$\large \left(\begin{array}{c|c|c}
    M & Y & -M B^T J R \\
    \hline
    0 & (M^T)^{-1} & 0 \\
    \hline
    0 & B & R
\end{array}\right)$$ with $R\in\mathrm{O}_\mathbb{K}(p-q,\infty)$ and $Y$ satisfying $M^{-1} Y + (M^{-1} Y)^T = - B^T J B$. \\

Our next step will be to decompose the group $H_x$ as a semidirect product, using the explicit matrix form. We can project $H_x$ to the (projective) group of block-diagonal matrices, with block dimensions $q,q,\infty$, using the map $$\left(\begin{array}{c|c|c}
    M & Y & -M B^T J R \\
    \hline
    0 & (M^T)^{-1} & 0 \\
    \hline
    0 & B & R
\end{array}\right)\mapsto\left(\begin{array}{c|c|c}
    M & 0 & 0 \\
    \hline
    0 & (M^T)^{-1} & 0 \\
    \hline
    0 & 0 & R
\end{array}\right).$$

It is immediate to see that this is a well-defined group homomorphism. As $M\in S<GL_q(\mathbb{K})$ and $R\in \mathrm{O}_\mathbb{K}(p-q,\infty)$, this gives a projection $H_x\rightarrow \mathrm{P}(S\times\mathrm{O}_\mathbb{K}(p-q,\infty))$. \\

The kernel $N$ of this projection is the space of matrices having the form $$\large \left(\begin{array}{c|c|c}
    Id_q & Y & -B^T J \\
    \hline
    0 & Id_q & 0 \\
    \hline
    0 & B & Id_\infty
\end{array}\right)$$ with $Y + Y^T = - B^T J B$. We will show that this group is soluble with solubility length two and that it satisfies the assumptions of Lemma \ref{soluble}. \\

The map sending a matrix as above to the block $B$ determines a homomorphism $N\rightarrow A$ where $A$ is the additive group of $q\times\infty$ matrices (i.~e.~linear maps from the $q$-dimensional vector space $V_2$ to the infinite-dimensional Hilbert space $V_3$), which is Abelian and isomorphic with the direct sum of $q$ Hilbert spaces of countable dimension, which is isomorphic with a single Hilbert space of countable dimension. The kernel of the map is the set of matrices in $N$ with $B=0$, which is isomorphic with the additive group of $q\times q$ matrices $Y$ that satisfy $Y+Y^T=0$, that is, the additive group of skew-symmetric matrices, isomorphic with $\mathbb{K}^{q(q-1)/2}$. \\

Therefore, we have a semidirect product decomposition $$H_x=N\rtimes\mathrm{P}(S\times \mathrm{O}_\mathbb{K}(p-q,\infty))$$ where $N$ is soluble. \\

What we need to show for our theorem is that any continuous homomorphism $f: \Gamma\rightarrow H_x$ has image contained in the stabilizer of a point in the symmetric space $X$. Consider the homomorphisms $f_S: \Gamma\rightarrow\mathrm{P}S$ and $f_O: \Gamma\rightarrow \mathrm{PO}_\mathbb{K}(p-q,\infty)$ defined by $f_S=\pi_S\circ f$ and $f_O=\pi_O\circ f$, where $\pi_S$ and $\pi_O$ are the natural projections of $H_x$ onto $\mathrm{P}S$ and $\mathrm{PO}_\mathbb{K}(p-q,\infty)$, respectively. \\

The homomorphism $f_S$ has image in $\mathrm{P}S<\mathrm{PGL}_q(\mathbb{K})$. As $\mathrm{PGL}_q(\mathbb{K})$ is a Lie group of rank $q-1<p$, we can use the inductive hypothesis of Theorem \ref{main3} and see that the image of $f_S$ fixes a point in the associated symmetric space. Up to conjugating the homomorphism (which is the same as changing the basis of $V_1$ and $V_2$ that we consider in our matrix decomposition), we can assume that the image of $f_S$ is contained in $\mathrm{PO}_\mathbb{K}(q)$. \\

Similarly, we see that the image of $f_O$ is contained in a subgroup that fixes a point in the infinite-dimensional symmetric space $X_\mathbb{K}(p-q,\infty)$. Up to conjugating, we may therefore assume that the image of $f_O$ is contained in $\mathrm{P}(\mathrm{O}_\mathbb{K}(p-q)\times \mathrm{O}_\mathbb{K}(\infty))$, meaning that every element is block-diagonal for the decomposition of $V_3$ used in the definition of the matrix $J$, and that the two nonzero blocks that appear are in $\mathrm{O}_\mathbb{K}(p-q)$ and $\mathrm{O}_\mathbb{K}(\infty)$. \\

Now we know that the image of $f$ is contained in $N\rtimes Q$ with $Q=\mathrm{P}(\mathrm{O}_\mathbb{K}(q)\times\mathrm{O}_\mathbb{K}(p-q)\times\mathrm{O}_\mathbb{K}(\infty))$. We wish to apply Lemma \ref{soluble} to find that it is actually contained in a conjugate of $Q$, which would prove our theorem. We only need to check the hypothesis that the conjugation of elements in $N$ by elements of $Q$ preserves a Hilbert scalar product in the two Abelian groups that appear in the soluble group structure, namely the additive group $A$ of maps $V_2\rightarrow V_3$ (the block $B$ in the decomposition) and the additive group of skew-symmetric matrices $Y$. \\

We show that the standard scalar product in these spaces is preserved. The norm associated to this scalar product can be regarded, in both cases, as the Hilbert sum of the squares of all coefficients of the associated matrix (in the case of skew-symmetric matrices, summing over the whole matrix instead of just on the upper-triangular part multiplies all the norms by $2$). The conjugation of a generic element in $N$ by a generic element in $S\times \mathrm{O}_\mathbb{K}(p-q,\infty)$ gives
\begin{align*}
\left(\begin{array}{c|c|c}
    M & 0 & 0 \\
    \hline
    0 & (M^T)^{-1} & 0 \\
    \hline
    0 & 0 & R
\end{array}\right)&\left(\begin{array}{c|c|c}
    Id_q & Y & -B^T J \\
    \hline
    0 & Id_q & 0 \\
    \hline
    0 & B & Id_\infty
\end{array}\right)\left(\begin{array}{c|c|c}
    M^{-1} & 0 & 0 \\
    \hline
    0 & M^T & 0 \\
    \hline
    0 & 0 & R^{-1}
\end{array}\right)= \\
&=\left(\begin{array}{c|c|c}
    Id_q & M Y M^T & -M B^T J R^{-1} \\
    \hline
    0 & Id_q & 0 \\
    \hline
    0 & R B M^T & Id_\infty
\end{array}\right).
\end{align*}

In our case $M\in O_{\mathbb{K}}(q)$, and this also implies $M^T\in O_{\mathbb{K}}(q)$. When a matrix $Y$ is multiplied by $M^T$ on the right, every row is transformed with an orthogonal transformation and therefore its norm stays the same. Hence, the norm of the whole matrix remains the same. Similarly, when multiplying by $M$ on the left, every column is transformed orthogonally and the general norm remains the same. \\

As for the transformation $B\mapsto R B M^T$, the same remark applies for the right multiplication by $M^T$, and also for the left multiplication by $R$ since $R\in\mathrm{O}_\mathbb{K}(p-q)\times\mathrm{O}_\mathbb{K}(\infty)$ and hence it preserves the standard scalar product on $V_3$. This concludes the proof of Theorem \ref{main3}. \\

\subsection{Conclusion}

We are now ready to prove Theorem \ref{main}, which implies Theorem \ref{Main} and Theorem \ref{Main2}. \\

Let $G=\mathrm{SL}_3(\mathbb{F})$ or $G=\mathrm{Sp}_4(\mathbb{F})$, with $\mathbb{F}$ a non-archimedean local field; in the case $G=\mathrm{Sp}_4(\mathbb{F})$ we assume that $\mathbb{F}$ has characteristic zero and its residue field $k$ has at least three elements. Let $\Gamma$ be a torsion-free cocompact lattice of $G$, which exists by (\cite{borelharder}, Theorem A, Theorem 3.3) and (\cite{garland}, Theorem 2.7). By Theorem \ref{main2}, $\Gamma$ has Property (F). \\

The group $G$ has Kazhdan's Property (T) (\cite{kazhdanT}, 1.4, 1.5), and since this property is inherited by lattices (\cite{kazhdanT}, Theorem 1.7.1), the lattice $\Gamma$ also has Kazhdan's Property (T). By the Delorme-Guichardet Theorem (\cite{kazhdanT}, Theorem 2.12.4), this means that these groups have Property (FH). \\

By Theorem \ref{main3}, any continuous action by isometries of $\Gamma$ on a space $X\in\mathcal{X}$ has a fixed point in $X$. This proves Theorem \ref{main}. \\

\appendix
\section{Appendix}

In this appendix we prove that the spaces $X_p(\mathbb{K})$, defined at the beginning of Section \ref{morefamily}, are irreducible metric spaces, meaning that they are not isometric to the product of two metric spaces not reduced to a point. \\

Set $X=X_p(\mathbb{K})$. We suppose by contradiction that $X=X_1\times X_2$ for some metric spaces $X_1,X_2$ not reduced to a point. First, we note that $X_1,X_2$ must be geodesic spaces, since $X$ is a geodesic space and geodesics in a product project to geodesics in factors (\cite{bh}, I.5.3). Moreover, geodesics in $X_1,X_2$ must extend indefinitely, since geodesics in $X$ extend indefinitely. This implies that the boundary at infinity of $X_1$ and $X_2$ is non-empty. We also note that $X_1,X_2$ must be CAT(0) spaces, as they are convex subsets of the CAT(0) space $X$, and that they must be complete, as their product is complete. \\

From now on, we will assume that $X$ has rank at least $2$. In fact, since both $X_1$ and $X_2$ contain an infinite geodesic, $X$ must contain an isometrically embedded copy of $\mathbb{R}^2$. \\

It is proven in \cite{bh} (II.9) that the Tits boundary $\partial_T(X_1\times X_2)$ of the product of two complete CAT(0) spaces is the spherical join of the Tits boundaries of the factors. Therefore, it suffices to prove:

\begin{prp} \label{nojoin}
    For $X=X_p(\mathbb{K})$, the Tits boundary $\partial_T X$ is not the spherical join of two non-empty metric spaces. \\
\end{prp}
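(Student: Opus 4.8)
The plan is to analyze the structure of the Tits boundary $\partial_T X$ for $X = X_p(\mathbb{K})$ via the flag description recalled in the excerpt: points at infinity of $X$ correspond to flags of isotropic $\mathbb{K}$-subspaces of $V$, and the Tits metric encodes incidence/angle data among such flags. Recall that the spherical join $Y_1 * Y_2$ of two non-empty metric spaces has a very rigid feature: every point of $Y_1$ is at Tits distance exactly $\pi/2$ from every point of $Y_2$, and more importantly, a point $\eta \in Y_1$ has an \emph{antipode} (a point at distance $\pi$) if and only if that antipode lies in $Y_1$; consequently $Y_1$ and $Y_2$ would each have to be closed under taking antipodes, and the diameter-$\pi$ structure would decompose accordingly. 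So the strategy is: (1) identify which boundary points of $X$ are \emph{antipodal} in the Tits metric — these should be exactly the pairs coming from a maximal isotropic flag and a ``transverse'' maximal isotropic flag, i.e.\ opposite chambers in the spherical building-like structure at infinity — and (2) show that the antipodality graph on $\partial_T X$ is connected in a way incompatible with a join decomposition.

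First I would make precise the building-theoretic picture: the rank-$p$ symmetric space $X_p(\mathbb{K})$ has a spherical Tits building ``at infinity'' of rank $p$ (of type $C_p$ or $B_p$ depending on $\mathbb{K}$), whose chambers correspond to complete flags of isotropic subspaces $0 \subsetneq W_1 \subsetneq \cdots \subsetneq W_p$, and $\partial_T X$ is (isometric to) the geometric realization of this building as a CAT(1) space — this is standard for symmetric spaces and carries over to the $X_p(\mathbb{K})$ case via the results of Duchesne cited earlier. Then I would invoke the fact that a spherical building is \emph{irreducible} (not a join) precisely when its Coxeter diagram is connected, and the diagrams $C_p$ and $B_p$ are connected for all $p \ge 1$; hence $\partial_T X$ is an irreducible spherical building and cannot be a non-trivial spherical join. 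The one subtlety is that $\partial_T X$ might properly contain the building realization (there could be extra ``singular'' directions), but since every geodesic ray in $X$ points toward a flag and the Tits metric is determined by the building structure, the Tits boundary is exactly the building; alternatively, one uses that $X$ has finite rank and invokes Proposition 2.2 in \cite{duc2023} to pin down $\partial_T X$.

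An alternative, more hands-on route that avoids quoting building irreducibility: suppose $\partial_T X = Y_1 * Y_2$. Pick a point $\xi \in \partial_T X$ corresponding to a maximal isotropic line $\mathrm{Span}\{e_1\}$, and a point $\xi'$ corresponding to $\mathrm{Span}\{e_2\}$ (using the hyperbolic basis from the excerpt); compute that $d_T(\xi, \xi') = \pi/2$ when $e_1, e_2$ are orthogonal isotropic lines, but $d_T(\xi, \xi'') < \pi/2$ for a third isotropic line $\xi''$ in general position in the plane they span — exhibiting three boundary points with pairwise Tits distances $< \pi$ and not all equal to $\pi/2$, which is impossible in a non-trivial join where any two points in different factors are at distance exactly $\pi/2$ and the only way to have distance $<\pi/2$ is to be in the same factor, forcing a configuration that a direct computation of angles in the flat $\mathbb{R}^2 \hookrightarrow X$ rules out.

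The main obstacle I anticipate is step (1)/the identification of $\partial_T X$ with the spherical building and the precise computation of Tits distances between flags in the infinite-dimensional setting — one must be careful that the infinite-rank ``$\mathrm{O}(\infty)$'' directions do not introduce a spurious factor, and that the CAT(1) structure behaves as in the finite-dimensional case; here the cleanest fix is probably to reduce, via the flag having a maximal element of some dimension $q \le p$ (exactly as done in Section 3.4 of the excerpt for the parabolic analysis), to the finite-rank building of type $C_q$ or $B_q$ sitting inside $\partial_T X$, and argue that any join decomposition of $\partial_T X$ would induce one on this sub-building, contradicting connectedness of its diagram.
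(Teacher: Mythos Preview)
Your main approach---identifying $\partial_T X$ with the geometric realization of a thick spherical building of type $C_p$ (or $BC_p$) and then invoking that an irreducible Coxeter diagram forces the building to be metrically join-irreducible---is correct and is genuinely different from what the paper does. The paper instead proves two self-contained results: first, a lemma that every simplex in the spherical building of $\mathrm{O}_\mathbb{K}(p,q)$ has diameter strictly less than $\pi/2$ (by an explicit coordinate computation in the fundamental apartment), and second, a general theorem that \emph{any} thick spherical building whose simplices all have diameter $<\pi/2$ cannot be a spherical join of two non-empty metric spaces. The proof of the latter is a direct CAT(1) argument: starting from hypothetical join factors $A_1,A_2$, one takes $x\in A_1$, $y\in A_2$, follows the geodesic past $y$ to the antipode $x'$ of $x$, shows $x'\in A_1$, then uses thickness and the diameter bound to find a ramification point $r$ strictly between $y$ and $x'$, producing a second antipode $x''\in A_1$; a CAT(1) comparison on the triangle $r,x',x''$ then forces $d(y,m)<\pi/2$ for the midpoint $m\in A_1$, contradicting the join property. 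Your route is shorter if one is willing to cite the metric join-irreducibility result for buildings with connected diagram (this is in Kleiner--Leeb or can be extracted from Caprace--Lytchak), whereas the paper's route is entirely self-contained and yields a statement of independent interest. One caution on your approach: the step ``irreducible diagram $\Rightarrow$ not a metric spherical join'' is about the \emph{metric} realization, not just the combinatorial building, so you do need a reference that rules out exotic metric join decompositions whose factors are not sub-buildings; this is exactly what the paper's hands-on CAT(1) argument circumvents.

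Your alternative ``hands-on'' route, however, has a gap as stated. The claim that ``the only way to have distance $<\pi/2$ is to be in the same factor'' is false: a generic point of $Y_1*Y_2$ lies in neither $Y_1$ nor $Y_2$, and two such points can be at any distance in $[0,\pi]$. You would first need to argue that your chosen boundary points (isotropic lines) must lie in $Y_1\cup Y_2$, which is not automatic and is in fact close to the heart of the matter. If you want to salvage this route, you would need something like the paper's antipode-tracking argument to pin down where specific points sit relative to the putative factors.
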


The Tits boundary of $X_p(\mathbb{K})$ was studied by Duchesne \cite{duc2013}: it is a thick spherical building, and all finite configurations of simplices are the same that appear in the spherical building of the finite-dimensional analogue $\mathrm{O}_\mathbb{K}(p,q)$ for sufficiently large $q$. Details on the structure of these buildings can be found in the book by Abramenko and Brown \cite{buildings} (6.7). We show the following:

\begin{lem}
    In the spherical building of $\mathrm{O}_\mathbb{K}(p,q)$, every simplex has diameter strictly smaller than $\pi/2$.
\end{lem}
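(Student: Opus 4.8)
The plan is to reduce the statement to one explicit computation in the spherical Coxeter complex of type $B_p$. In the range that matters here one has $q>p$ (in the intended application $q$ is taken arbitrarily large), so the spherical building of $\mathrm{O}_\mathbb{K}(p,q)$ is a thick building of type $B_p=C_p$; for $\mathbb{K}=\mathbb{C}$ or $\mathbb{H}$ the relative root system is of type $BC_p$, but the associated Coxeter complex is again the $W(B_p)$-tessellation of the sphere, so the argument is uniform. Its apartments are isometric to $S^{p-1}$ equipped with the angular metric underlying the Tits metric, tessellated by $W=W(B_p)$, the chambers being the closed spherical simplices cut out by the reflection hyperplanes (see \cite{buildings}, 6.7). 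Since every simplex of a building lies in an apartment, and $W$ acts on that apartment by isometries transitively on chambers, any simplex is a face of a chamber that is carried by an element of $W$ onto the fixed fundamental chamber $C_0$. Applying an isometry of the apartment does not change the diameter and passing to a sub-simplex does not increase it, so it suffices to prove $\mathrm{diam}(C_0)<\pi/2$ for the angular metric.

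I would then make $C_0$ explicit. Realize $W(B_p)$ acting on $\mathbb{R}^p$ with simple roots $e_1-e_2,\dots,e_{p-1}-e_p,e_p$, so that the closed fundamental Weyl chamber is the simplicial cone $C=\{x\in\mathbb{R}^p:\ x_1\ge x_2\ge\cdots\ge x_p\ge 0\}$ and $C_0=C\cap S^{p-1}$. Writing $x=\sum_{k=1}^{p}(x_k-x_{k+1})v_k$ with $x_{p+1}:=0$ and $v_k:=e_1+\cdots+e_k$, one checks that $C$ is exactly the set of nonnegative combinations of $v_1,\dots,v_p$ and that these span its extreme rays; hence $C_0$ is the spherical simplex with vertices $\widehat v_k:=v_k/\sqrt k$. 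Since $\langle v_i,v_j\rangle=\min(i,j)$, we get $\langle\widehat v_i,\widehat v_j\rangle=\min(i,j)/\sqrt{ij}$, a quantity that is always $\le 1$ and is $\ge 1/\sqrt p$, with equality exactly for $\{i,j\}=\{1,p\}$.

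The last step is a Cauchy--Schwarz-type estimate for two arbitrary points $\xi,\eta\in C_0$. Write $\xi$ and $\eta$ as the normalizations of $\sum_i a_i\widehat v_i$ and $\sum_j b_j\widehat v_j$ with all $a_i,b_j\ge 0$ and not all zero. Bounding $\langle\widehat v_i,\widehat v_j\rangle\ge 1/\sqrt p$ in the numerator and $\langle\widehat v_i,\widehat v_j\rangle\le 1$ in each denominator gives
\[
\cos d(\xi,\eta)=\langle\xi,\eta\rangle
=\frac{\sum_{i,j}a_ib_j\langle\widehat v_i,\widehat v_j\rangle}{\bigl|\sum_i a_i\widehat v_i\bigr|\,\bigl|\sum_j b_j\widehat v_j\bigr|}
\ \ge\ \frac{\tfrac{1}{\sqrt p}\bigl(\sum_i a_i\bigr)\bigl(\sum_j b_j\bigr)}{\bigl(\sum_i a_i\bigr)\bigl(\sum_j b_j\bigr)}
=\frac{1}{\sqrt p}>0 ,
\]
so $d(\xi,\eta)\le\arccos(1/\sqrt p)<\pi/2$. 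Hence $\mathrm{diam}(C_0)\le\arccos(1/\sqrt p)$, and by the reduction above every simplex of the building has diameter at most $\arccos(1/\sqrt p)$, which is strictly less than $\pi/2$.

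The substance is entirely in the one-line inequality; the points that require care are bookkeeping: confirming that the building is of type $B_p$ precisely because $q>p$ (if $q=p$ one gets type $D_p$, not needed here but treatable the same way, since the same vertices $v_k$ and the extra vertex $e_1+\cdots+e_{p-1}\pm e_p$ still pair positively), checking that the apartment carries the $W(B_p)$-tessellation with the normalization of the angular metric that enters the Tits metric (standard, \cite{buildings}, 6.7), and the elementary verification that the extreme rays of $C$ are the rays through the $v_k$. The only genuine obstacle is making the passage ``every simplex is a face of a chamber, every chamber is $W$-equivalent to $C_0$'' fully rigorous, but this is precisely the defining structure of a building.
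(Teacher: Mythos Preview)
Your proof is correct and follows essentially the same route as the paper: reduce to a single fundamental chamber in an apartment, identify that chamber as $C_0=\{x\in S^{p-1}:x_1\ge\cdots\ge x_p\ge 0\}$, and show that any two points in $C_0$ have positive Euclidean inner product. The paper's final estimate is the cruder one-liner $\langle x,y\rangle\ge x_1y_1\ge 1/p$ (using $x_1\ge 1/\sqrt{p}$ and non-negativity of the remaining terms), whereas your vertex-pairing argument yields the sharp diameter $\arccos(1/\sqrt p)$; but the architecture of the two proofs is the same.
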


\begin{proof}
    We refer to \cite{buildings} for details on the construction of the building. Here we only describe the structure of the fundamental apartment, which allows us to see that every simplex has diameter $<\pi/2$. \\

    The building is the flag complex of totally isotropic subspaces of a $(p+q)$-dimensional $\mathbb{K}$-vector space equipped with a sesquilinear form $\langle\cdot,\cdot\rangle$ of index $p$ (we assume $p<q$). The fundamental apartment is associated with a $(2p)$-tuple of linearly independent vectors $(e_1,\ldots,e_p,f_1,\ldots,f_p)$ such that $\langle e_i,e_j\rangle=\langle f_i,f_j\rangle=0$ for $1\leq i,j\leq p$, $\langle e_i,f_i\rangle=1$ for $1\leq i\leq p$, and $\langle e_i,f_j\rangle=0$ for $i\neq j$. The apartment is isometric to the sphere $S^{p-1}$ and is the flag complex of totally isotropic subspaces spanned by elements of the above $(2p)$-tuple. \\

    Now we describe the structure of the simplices in the sphere, which is forced by symmetry. We can choose coordinates such that the sphere is the unit sphere in $\mathbb{R}^p$, for $1\leq i\leq p$ the subspace $\mathrm{Span}\{e_i\}$ is associated with the point with $i$-th coordinate equal to $1$, and the subspace $\mathrm{Span}\{f_i\}$ is associated with the point with $i$-th coordinate equal to $-1$. These points determine a subdivision of the sphere into $2^p$ \textit{super-simplices}: each of them is the intersection of the sphere with the subset of $\mathbb{R}^p$ resulting from a specific choice of the signs of the $p$ coordinates. Each super-simplex corresponds to a maximal isotropic subspace and is then subdivided barycentrically into $p!$ individual simplices. Each subset $A\subset\{e_1,\ldots,e_p,f_1,\ldots,f_p\}$ with $\mathrm{Span}(A)$ totally isotropic is associated with the barycenter of the set of points associated with elements of $A$. \\

    Clearly, all super-simplices are isometric and all individual simplices are isometric. Let $\mathcal{S}$ be a super-simplex, without loss of generality, $$\mathcal{S}=\{(x_1,\ldots,x_p)\in S^{p-1}: x_i\geq 0, 1\leq i\leq p\}.$$
    We now describe the individual simplices that make up $\mathcal{S}$. For each permutation $\sigma$ of $\{1,\ldots,p\}$, we have the simplex $$s_\sigma=\{(x_1,\ldots,x_p)\in\mathcal{S}: x_{\sigma(1)}\geq x_{\sigma(2)}\geq\cdots\geq x_{\sigma(p)}\}.$$ This corresponds to the flag $$\mathrm{Span}\{e_{\sigma(1)}\}\subset\mathrm{Span}\{e_{\sigma(1)},e_{\sigma(2)}\}\subset\cdots\subset\mathrm{Span}\{e_{\sigma(1)},\ldots,e_{\sigma(p)}\}.$$

    We assume without loss of generality $\sigma=id$ and show that the simplex $s_{id}$ has diameter $<\pi/2$. Let $x\in S^{p-1}$. The set of points in $S^{p-1}$ at distance $\geq\pi/2$ from $x$ can be defined by $S^{p-1}\cap\{y\in\mathbb{R}^p: \langle x,y\rangle\leq 0\}$, where we use the standard scalar product in $\mathbb{R}^p$. If we assume that there exist $x,y\in s_{id}$ with $d(x,y)\geq\pi/2$, they must satisfy $\langle x,y\rangle\leq 0$. As all their coordinates are non-negative and their first coordinate is at least $\sqrt{1/p}$, we have $\langle x,y\rangle\geq 1/p$, a contradiction. Hence, the diameter of $s_{id}$, and similarly the diameter of any simplex in the sphere, is strictly smaller than $\pi/2$. \\
\end{proof}

Now we can deduce Proposition \ref{nojoin} from the following general result:

\begin{thm}
    Let $\Sigma$ be a thick spherical building in which every simplex has diameter strictly smaller than $\pi/2$. Then $\Sigma$ is not the spherical join of two non-empty metric spaces.
\end{thm}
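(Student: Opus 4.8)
The plan is to argue by contradiction: assume $|\Sigma| = Y_1 * Y_2$ with both factors non-empty, and then exhibit a simplex of $\Sigma$ of diameter at least $\pi/2$. Two easy reductions come first. A thick spherical building of rank $0$ is a single point and of rank $1$ is a discrete set of at least three points; the former is not a non-trivial join, and the latter is disconnected, whereas a spherical join of two non-empty spaces is always path-connected (one can join every point to a fixed point of $Y_1$ through the join parameter). So we may assume $\mathrm{rank}(\Sigma) = n \ge 2$. Moreover neither $Y_i$ is a single point: if, say, $Y_1 = \{p\}$, then $p$ would be at distance at most $\pi/2$ from every point of $|\Sigma|$ and hence have no antipode, but in a thick spherical building every point has an antipode at distance $\pi$ (take the opposite point inside any apartment containing it). Thus $|Y_1|, |Y_2| \ge 2$.

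Next I would show that every vertex of $\Sigma$ lies in $Y_1 \cup Y_2$. Write a vertex $v$ in join coordinates as $(\theta, a, b)$ with $\theta \in [0, \pi/2]$, $a \in Y_1$, $b \in Y_2$. Since $|\Sigma|$ is a piecewise-spherical complex, the space of directions $\Sigma_v(|\Sigma|)$ is canonically isometric to the geometric realization of the link $\mathrm{Lk}(v)$, which is a thick spherical building of rank $n - 1 \ge 1$. If $0 < \theta < \pi/2$, the join structure on $|\Sigma|$ induces on $\Sigma_v(|\Sigma|)$ a spherical join decomposition having an $S^0$ factor, namely the two directions in which $\theta$ can be moved; so $|\mathrm{Lk}(v)|$ would be a spherical suspension. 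But a thick spherical building is never a spherical suspension: the two suspension points would be antipodes of each other admitting no other antipode, contradicting thickness (at the level of Coxeter diagrams, an $S^0$ join factor is an isolated node, i.e. a non-thick rank-one building). Hence $\theta \in \{0, \pi/2\}$ and $v \in Y_1 \cup Y_2$. I expect this to be the step requiring the most care, since it rests on the identification of links in the geometric realization with links in the building and on the basic structure theory of spherical buildings.

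Granting this, the contradiction is immediate. If some chamber $C$ of $\Sigma$ had a vertex $u \in Y_1$ and a vertex $w \in Y_2$, then, because in a spherical join any point of one factor is at distance exactly $\pi/2$ from any point of the other, we would have $d(u, w) = \pi/2$; as $u$ and $w$ are vertices of the simplex $C$, this forces $\mathrm{diam}(C) \ge \pi/2$, contrary to hypothesis. So every chamber has all of its vertices in a single factor. Since $\Sigma$ is gallery-connected and two adjacent chambers share a panel, hence share $n - 1 \ge 1$ vertices, the factor is the same for all chambers; say all vertices of $\Sigma$ lie in $Y_1$. Then every point $x$ of the non-empty set $Y_2$ is at distance $\pi/2$ from every vertex of $\Sigma$; but $x$ lies in the interior of some simplex $\tau$, which has at least one vertex $w$, and $d(x, w) \le \mathrm{diam}(\tau) < \pi/2$, a contradiction. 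Together with the preceding lemma this proves Proposition \ref{nojoin}, and hence that the spaces $X_p(\mathbb{K})$ are irreducible metric spaces.
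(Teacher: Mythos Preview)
Your argument is correct but follows a genuinely different route from the paper's. The paper proceeds by a direct metric argument: it picks $x\in A_1$, $y\in A_2$, follows the geodesic from $x$ through $y$ to its antipode $x'$, shows via join coordinates that $x'\in A_1$, then uses thickness together with the diameter hypothesis to branch the geodesic at an interior point $r$ and produce a second antipode $x''\in A_1$; the CAT(1) inequality applied to the triangle $(r,x',x'')$ then forces the midpoint $m$ of the segment $[x',x'']$ to lie at distance $<\pi/2$ from $y$, contradicting $m\in A_1$. Your approach is instead structural: you locate every vertex of $\Sigma$ in $Y_1\cup Y_2$ by analyzing spaces of directions, and then use the diameter hypothesis and gallery-connectedness to force all vertices into a single factor. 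The paper's proof is entirely self-contained, whereas yours imports two facts---the identification $\Sigma_v(Y_1*Y_2)\cong S^0*\Sigma_aY_1*\Sigma_bY_2$ at interior join points (standard via $C(Y_1*Y_2)\cong CY_1\times CY_2$, but needing the care you flag) and the fact that a thick spherical building is never a spherical suspension. Your antipode argument for the latter is the right one; the parenthetical appeal to Coxeter diagrams, however, is essentially invoking the general join-rigidity theorem for spherical buildings, which already subsumes the statement being proved, so it should be dropped. In exchange for these external inputs, your proof is shorter and more conceptual, and it avoids the $\varepsilon$-bookkeeping of the paper's argument.
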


\begin{proof}
    Suppose by contradiction that $\Sigma$ is the spherical join $A_1*A_2$. Then $A_1,A_2$ are non-empty subsets of $\Sigma$ with the following properties (deduced from the construction of the spherical join, see \cite{bh}, I.5.13):
    \begin{enumerate}[label=(\alph*)]
        \item for all $x\in A_1, y\in A_2$ we have $d(x,y)=\pi/2$;
        \item for all $z\in\Sigma$ there exist $x\in A_1,y\in A_2$ such that $z$ belongs to the unique geodesic segment between $x$ and $y$; if $z\not\in A_1\cup A_2$, these $x,y$ are the unique points in $A_1,A_2$ at minimal distance from $z$.
    \end{enumerate}
    Moreover, since $\Sigma$ is a CAT(1) space (\cite{bh}, II.10A.4), $A_1$ and $A_2$ must also be CAT(1) spaces (\cite{bh}, II.3.15). In particular they are $\pi$-geodesic. \\

    Let $x\in A_1, y\in A_2$. By (a) they are at distance $\pi/2$, thus there exists a unique geodesic segment between them. Since $\Sigma$ is a spherical building, this segment has to be part of a geodesic loop $\gamma$ of length $2\pi$. Let $x'$ be the point on this loop that is antipodal to $x$. Let $z_1\in\gamma$ such that $d(y,z_1)=\pi/2-\varepsilon$ and $d(z_1,x')=\varepsilon$, where $\varepsilon$ is small enough so that the geodesic from $y$ to $z_1$ extends in a unique way at least up to $x'$. Let $x_1\in A_1,y_1\in A_2$ be the points obtained by applying property (b) on $z_1$. We have $d(x,y_1)=\pi/2$ by property (a), hence $d(y_1,z_1)\geq|d(x,z_1)-d(x,y_1)|=\pi/2-\varepsilon$ by the triangle inequality. Since $y_1$ is the unique point in $A_2$ at minimal distance from $z_1$, and since $d(y,z_1)=\pi/2-\varepsilon$, we must have $y_1=y$, and $x_1=x'$ since it must be at distance $\epsilon$ from $z_1$ on a geodesic extending the one coming from $y$ and the geodesic $\gamma$ is the unique extension at least up to the point $x'$. Therefore $x'\in A_1$. \\

    Now, since simplices in $\Sigma$ have diameter strictly smaller than $\pi/2$, and since the building is thick, there exists a point $r$ in the interior of the geodesic segment between $y$ and $x'$ where the geodesic ramifies, meaning that the extension beyond $r$ of the geodesic segment from $y$ to $r$ is not unique. The point $z_1$ is of course on the segment between $r$ and $x'$. The non-uniqueness of the geodesic extension beyond $r$ means that there exists another geodesic $\widetilde{\gamma}$ that contains the points $x,y,r$ but not the point $x'$. It will contain instead another point $x''$ antipodal to $x$. We can choose $z_2$ on the geodesic segment between $r$ and $x''$, close enough to $x''$ so that the geodesic segment from $r$ to $z_2$ extends uniquely at least up to $x''$, and repeat the same proof as before to show that $x''\in A_1$. \\
    
    Now we have $d(x',x'')\leq d(x',r)+d(r,x'')=\pi-2d(y,r)<\pi$, hence since $A_1$ is $\pi$-geodesic the geodesic segment between $x'$ and $x''$ is contained in $A_1$. Let $m$ be the midpoint of this segment. From the CAT(1) inequality applied on the triangle with vertices $r,x',x''$ we get $d(r,m)<\pi/2-d(y,r)$. Therefore $d(y,m)\leq d(y,r)+d(r,m)<\pi/2$, which is a contradiction since $y\in A_2$ and $m\in A_1$. \\
\end{proof}

\printbibliography

Federico Viola, EPFL, Lausanne, CH-1015, Switzerland.

\textit{E-mail address: federico.viola@epfl.ch}

\end{document}